\DeclareSymbolFont{SY}{U}{psy}{m}{n}
\DeclareMathSymbol{\emptyset}{\mathord}{SY}{'306}
\theoremstyle{plain}
\newtheorem{thm}{Theorem}[section]
\newtheorem{lem}[thm]{Lemma}
\newtheorem{prop}[thm]{Proposition}
\theoremstyle{definition}
\newtheorem{rem}[thm]{Remark}
\DeclareMathOperator{\Ad}{Ad}
\DeclareMathOperator{\ad}{ad}
\DeclareMathOperator{\Hol}{Hol}
\DeclareMathOperator{\Hom}{Hom}
\DeclareMathOperator{\Aut}{Aut}
\newcounter{defcounter}
\newenvironment{myequation}{%
\addtocounter{equation}{-1}
\refstepcounter{defcounter}

\begin{equation}}
{\end{equation}}
\title[Homogeneous bundles and operators in the Cowen-Douglas class]
{Homogeneous Hermitian holomorphic vector bundles and the Cowen-Douglas class over bounded symmetric domains 
}
\author[A. Kor\'{a}nyi]{Adam Kor\'{a}nyi} 
\author[G. Misra]{Gadadhar Misra}
\address[A. Kor\'{a}nyi]{Lehman College\\
Bronx, NY 10468}
\address[G. Misra]{Department of Mathematics\\Indian Institutte of Science\\Bangalore 560012}
\email[A. Kor\'{a}nyi]{adam.koranyi@lehman.cuny.edu}
\email[G. Misra]{gm@math.iisc.ernet.in}
\keywords{homogeneous Hermitian holomorphic vector bundles, holomorphic induction, Cowen-Douglas class} 
\subjclass[2010]{Primary 47A13, 20C25; Secondary 32M15, 53C07}
\thanks{Both the authors were supported, in part, by a DST - NSF S\&T Cooperation Program and the J C Bose National Fellowship of the Department 
of Science and Technology. The second author also gratefully acknowledges the support from the University Grants Commission Centre for Advanced Studies.}
\begin{document}

\begin{abstract}
It is known that all the vector bundles of the title can be obtained by holomorphic induction from representations of a certain parabolic group on finite dimensional inner product spaces. The representations, and the induced bundles, have composition series with irreducible factors.  We give a condition under which the bundle and the direct sum of its irreducible constituents are intertwined by an equivariant constant coefficient differential operator. We show that in the case of the unit ball in $\mathbb C^2$ this condition is always satisfied.
As an application we show that all homogeneous pairs of Cowen-Douglas operators are similar to direct sums of certain basic pairs.
\end{abstract}
\maketitle
 
\section{Holomorphic vector bundles}
Let $\mathfrak g$ be a simple non-compact Lie algebra with Cartan decomposition $\mathfrak g = \mathfrak k + \mathfrak p$ such that $\mathfrak k$ is not semi-simple. Then $\mathfrak k$ is the direct sum of its center and  of its semisimple part, $\mathfrak k = \mathfrak z + \mathfrak k_{\rm ss},$ and there is an element $\hat{z}$ which generates $\mathfrak z$ and $\ad(\hat{z})$ is a complex structure on $\mathfrak p.$

The complexification $\mathfrak g^\mathbb C$ is then the direct sum $\mathfrak p^+ + \mathfrak K^\mathbb C + \mathfrak p^-$ of the $i,0,-i$ eigenspaces of $\ad(\hat{z})$ 
We let $G^\mathbb C$ denote the simply connected Lie group with Lie algebra $\mathfrak g^\mathbb C$ and we let $G, K^\mathbb C, K, P^{\pm}, \ldots $ be the analytic subgroups corresponding to $\mathfrak g^\mathbb C, \mathfrak k^\mathbb C, \mathfrak k, \mathfrak p^{\pm},\ldots.$
We denote by $\tilde{G}$ the universal covering group of the group $G$ and by $\tilde{K},\tilde{K}_{\rm ss},\ldots $ its analytic subgroups corresponding to $\mathfrak k,\mathfrak k_{\rm ss},\ldots .$

$K^\mathbb CP^-$ is a parabolic subgroup of $G^\mathbb C.$ $P^+K^\mathbb CP^-$ is open dense in $G^\mathbb C.$ The corresponding decomposition $g^+g^0g^-$ of any $g$ in $P^+K^\mathbb CP^-$ is unique. The natural map $G/K \to G^\mathbb C/K^\mathbb C P^-$ is a holomorphic imbedding, its image is in the orbit of $P^+.$   Applying now $\exp_{\mathfrak p^+}^{-1}$ we get the Harish-Chandra realization of $G/K$ as a bounded symmetric domain $\mathcal D\subset\mathfrak p^+.$ The action of $g\in G$ on $z\in \mathcal D,$ written $g\cdot z,$  is then defined by $\exp(g\cdot z) = (g \exp z)^+.$ 
We will use the notation $k(g,z) = (g \exp z)^0$ and $\exp Y(g,z) = (g \exp z)^-,$ so we have  
$$
g\exp z = (\exp(g\cdot z)) k(g,z) \exp (Y(g,z)).
$$

%
The $\tilde{G}$ - homogeneous Hermitian holomorphic vector bundles ({\tt hHhvb}) over $\mathcal D$  are 
obtained by holomorphic induction from representations $(\rho,V)$ of $\mathfrak k^\mathbb C + \mathfrak p^-$ on finite dimensional inner product spaces $V$ such that $\rho(\mathfrak k)$ is skew Hermitian.   
We write $\rho^0,\,\rho^{-}$ for the restrictions of $\rho$ to $\mathfrak k^\mathbb C$ and $\mathfrak p^-,$ respectively. 
The representation space $V$ is the orthogonal direct sum of its subspaces $V_\lambda$ ($\lambda \in \mathbb R$) on which $\rho^0(\hat{z})=i \lambda.$ It is easy to see that $\rho^-(Y) V_\lambda \subset V_{\lambda-1}$ for $Y\in \mathfrak p^-.$ We also have 
\begin{equation} \label{1}
\rho^-([Z,Y]) = [\rho^0(Z), \rho^-(Y)], \,\,Z\in \mathfrak k^\mathbb C,\, Y\in \mathfrak p^-.
\end{equation}

We note that  if  representations  $\rho^0$ and $\rho^-$ of $\mathfrak k^\mathbb C$ and $\mathfrak p^-,$ respectively, are given, then they will together give a representation of $\mathfrak k^\mathbb C + \mathfrak p^-$ if and only if  equation \eqref{1} holds. We call $(\rho, V)$ and the induced bundle, indecomposable if it is not the orthogonal sum of sub-representations, respectively, sub-bundles. We  restrict ourselves to describing these.
\begin{prop} \label{prop 1.1}
Every indecomposable holomorphic homogeneous Hermitian vector bundle $E$ can be written as a tensor product $L_{\lambda_0} \otimes E^\prime$, where $L_{\lambda_0}$ is the line bundle induced by a character $\chi_{\lambda_0}$ and $E^\prime$ is the lift to $\tilde{G}$ of a $G$ - homogeneous  holomorphic  Hermitian vector bundle which is the restriction to $G$ and $\mathcal D$ of a $G^\mathbb C$ - homogeneous vector bundle induced in the holomorphic category by a representation of $K^\mathbb CP^-$.
\end{prop}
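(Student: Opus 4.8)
The plan is to split off a central line bundle so that the residual representation integrates, not merely from the Lie algebra $\mathfrak k^{\mathbb C}+\mathfrak p^-$ to $\tilde G$, but all the way up to the group $K^{\mathbb C}P^-$; holomorphic induction from that group then produces $E'$. Since $\mathfrak k^{\mathbb C}$ commutes with $\hat z$ it preserves each $V_\lambda$, while $\rho^-(Y)V_\lambda\subset V_{\lambda-1}$ shows $\mathfrak p^-$ lowers the grading by one; grouping the $V_\lambda$ by the class of $\lambda$ in $\mathbb R/\mathbb Z$ thus yields mutually orthogonal subrepresentations, so indecomposability confines the grading to a single coset. I then form the $\tilde G$-line bundle $L_{\lambda_0}$ attached to the central character $\chi_{\lambda_0}$ (for a value $\lambda_0$ to be fixed below) and set $\rho'=\chi_{\lambda_0}^{-1}\otimes\rho$, $E'=L_{\lambda_0}^{-1}\otimes E$; then $E=L_{\lambda_0}\otimes E'$ holds by construction, and everything reduces to realizing $E'$ as the asserted restriction of an induced bundle, i.e.\ to integrating $\rho'$ to $K^{\mathbb C}P^-$.

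Because $\rho(\mathfrak k)$ is skew-Hermitian, $\rho$ integrates to a unitary representation of the universal cover $\tilde K$, and the sole obstruction to its descending to $K$ is the operator $\rho(\delta)$ on the generator $\delta$ of $\pi_1(K)$. Two structural facts control this. First, since $K^{\mathbb C}P^-$ is parabolic in the simply connected $G^{\mathbb C}$, the semisimple part $K_{\rm ss}^{\mathbb C}$ of its Levi is simply connected; hence $\pi_1(K)\cong\mathbb Z$ is carried by the central circle $\exp(\mathbb R\hat z)$ and $\delta$ has nonzero $\hat z$-component. Second, $\delta$ lies in the (central) kernel of $\tilde K\to K$, so $\Ad(\delta)=\mathrm{id}$ on $\mathfrak g^{\mathbb C}$ and, by the integrated form of \eqref{1}, $\rho(\delta)$ commutes with $\rho^-(\mathfrak p^-)$ as well as with $\rho^0(\mathfrak k^{\mathbb C})$. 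Thus $\rho(\delta)$ is a unitary self-intertwiner of the indecomposable module $(\rho,V)$; its endomorphism ring is local by Fitting's lemma, and a semisimple (here unitary) element of a local finite-dimensional algebra is scalar, so $\rho(\delta)=\beta\,\mathrm{id}$ with $|\beta|=1$. As the $\hat z$-component of $\delta$ is nonzero, I may now fix $\lambda_0$ so that $\chi_{\lambda_0}(\delta)^{-1}\beta=1$; then $\rho'(\delta)=\mathrm{id}$, so $\rho'|_{\mathfrak k}$ descends to the compact group $K$ and extends uniquely and holomorphically to $K^{\mathbb C}$. The factor $\mathfrak p^-$ is harmless, $(\rho')^-$ acting nilpotently on the abelian $\mathfrak p^-$ and so integrating to the vector group $P^-$, and \eqref{1} is precisely the infinitesimal compatibility $\rho^0(k)\rho^-(Y)\rho^0(k)^{-1}=\rho^-(\Ad(k)Y)$ that fuses the $K^{\mathbb C}$- and $P^-$-actions into one holomorphic representation of $K^{\mathbb C}P^-$.

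Holomorphic induction from this representation gives a $G^{\mathbb C}$-homogeneous holomorphic vector bundle over $G^{\mathbb C}/K^{\mathbb C}P^-$; restricting it along $\mathcal D=G/K\hookrightarrow G^{\mathbb C}/K^{\mathbb C}P^-$ produces a $G$-homogeneous holomorphic Hermitian bundle whose lift to $\tilde G$ is $E'$, whence $E=L_{\lambda_0}\otimes E'$ as claimed. I expect the genuine obstacle to be the middle paragraph: recognizing the descent obstruction as the single scalar $\rho(\delta)$ and cancelling it by a central twist. Both inputs are indispensable there — simple-connectivity of $K_{\rm ss}^{\mathbb C}$ confines the obstruction to the central direction, while indecomposability together with unitarity forces $\rho(\delta)$ to be scalar; without the former, $\delta$ could have trivial $\hat z$-component, putting $\rho(\delta)$ beyond the reach of any central twist $\chi_{\lambda_0}$, and the decomposition would fail.
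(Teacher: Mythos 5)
The paper never actually writes out a proof of Proposition \ref{prop 1.1}: it says only that ``the proof involves some structural properties of $G^\mathbb C$, which we omit in this short Announcement.'' So there is no line-by-line comparison to be made; your argument can only be judged on its own merits and against that hint. On those terms it is essentially sound, and it uses exactly the kind of structural input the authors allude to: the derived group of the Levi factor of the parabolic $K^\mathbb C P^-$ in the simply connected $G^\mathbb C$ is simply connected, which confines $\pi_1(K)$ to the central circle $\exp(\mathbb R\hat{z})$ and guarantees that the generator $\delta$ of $\ker(\tilde{K}\to K)$ has nonzero $\hat{z}$-component, so that a central character twist $\chi_{\lambda_0}$ can cancel $\rho(\delta)$. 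The reduction of the integration problem to the single element $\delta$, the commutation of $\rho(\delta)$ with $\rho^-(\mathfrak p^-)$ via the integrated form of \eqref{1}, the existence of a suitable $\lambda_0$ (because $\chi_{\lambda_0}(\delta)$ sweeps the whole circle as $\lambda_0$ varies, precisely since the $\hat{z}$-component of $\delta$ is nonzero), and the passage from a unitary representation of the compact group $K$ to a holomorphic representation of $K^\mathbb C\ltimes P^-$ are all correct.

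One step needs repair. You invoke Fitting's lemma to conclude that $\rho(\delta)$ is scalar, and Fitting's lemma requires $(\rho,V)$ to be indecomposable as a module. But the paper defines indecomposability more weakly: $(\rho,V)$ is indecomposable if it is not an \emph{orthogonal} direct sum of subrepresentations. An orthogonally indecomposable representation may still be decomposable in the module sense, so its endomorphism ring need not be local, and the appeal to Fitting's lemma is not justified under the stated hypothesis. The fix is immediate and uses the unitarity you already have in hand: $\rho(\delta)$ is unitary and commutes with $\rho(\mathfrak k^\mathbb C+\mathfrak p^-)$, so its eigenspaces are mutually orthogonal subrepresentations, giving an orthogonal decomposition of $V$; orthogonal indecomposability then forces a single eigenvalue, i.e.\ $\rho(\delta)=\beta\,\mathrm{id}$ with $|\beta|=1$. (This is the same spectral argument as your $\mathbb R/\mathbb Z$-coset grouping in the first paragraph, applied to $\rho(\delta)$ in place of $\rho^0(\hat{z})$.) With that substitution, your proof goes through.
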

The proof involves some structural properties of $G^\mathbb C,$ which we omit in this short Announcement. 


As shown in \cite{HW}, $P^+ \times \tilde{K}^\mathbb C \times P^-$ can be given 
a structure of complex analytic local group such that (writing $\pi: \tilde{K}^\mathbb C \to K^\mathbb C$)
${\rm id} \times \pi \times{\rm id}$ is the universal local group covering of $P^+K^\mathbb C P^-.$
We write $\tilde{G}_{\rm loc}$ for this local group and abbreviate ${\rm id} \times \pi \times{\rm id}$ to $\pi.$  
By \cite{HW}, $\tilde{G},\, \tilde{K}^\mathbb C P^-,$ $P^+ \tilde{K}^\mathbb C$ are closed subgroups of 
$\tilde{G}^\mathbb C_{\rm loc}$ and $\tilde{G} \exp \mathcal D \subset \tilde{G}^\mathbb C_{\rm loc}.$
Defining $g\cdot z = \pi(g) \cdot z$ and $Y(g,z) = Y(\pi(g),z)$ we have the decomposition
$$
g \exp z = (\exp g\cdot z) \tilde{k}(g,z)\exp Y(g,z), \,\, (g\in \tilde{G}, z\in\mathcal D)
$$
in $\tilde{G}_{\rm loc}.$ We write $\tilde{b}(g,z) = \tilde{k}(g,z) \exp Y(g,z);$ then $\tilde{b}(g,z)$ satisfies the 
multiplier identity and $\tilde{b}(kp^-,0) = k p^-$ for $kp^- \in \tilde{K}^\mathbb C P^-.$  

Hence given a representation  $(\rho,V)$ of $\mathfrak k^\mathbb C + \mathfrak p^-$ as above, the 
holomorphically induced bundle has a canonical trivialization such that the sections are the elements of $\Hol(\mathcal D,V),$ 
and $\tilde{G}$ acts via the multiplier 
$$
\rho(\tilde{b}(g,z)) = \rho^0(\tilde{k}(g,z))\rho^-(\exp Y(g,z)).
$$ 

If $f\in\Hol(\mathcal D,V),$ then we write $Df$ for the derivative: $Df(z) X = (D_Xf)(z)$ for $X\in \mathfrak p^+.$ 
Thus $Df(z)$ is a $\mathbb C$ - linear map from $\mathfrak p^+$ to $V.$ 
\begin{lem} \label{lem 1.2}
For any holomorphic representation $\tau$ of $\tilde{K}^\mathbb C$ and any $g\in \tilde{G}$, $z\in\mathcal D$, $X\in \mathfrak p^+$,  
$$D_X\tau\big (\tilde{k}(g,z)^{-1}\big ) = - \tau\big ([ Y(g,z), X]\big )\tau\big (\tilde{k}(g,z)^{-1}\big ).$$
Furthermore,
$$
D_X\,Y(g,z)=\frac{1}{2} \big [Y(g,z),[Y(g,z),X]\big ].
$$
\end{lem}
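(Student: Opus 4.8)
The plan is to extract both formulas at once by differentiating the single defining identity
$$
g\exp(z+tX) = \exp\!\big(g\cdot(z+tX)\big)\,\tilde{k}(g,z+tX)\,\exp Y(g,z+tX)
$$
at $t=0$ and reading off its three components in the grading $\mathfrak g^\mathbb C = \mathfrak p^+ + \mathfrak k^\mathbb C + \mathfrak p^-$ (the $i,0,-i$ eigenspaces of $\ad(\hat z)$). The cleanest bookkeeping uses the left logarithmic derivative $\delta^{\ell}c = c^{-1}\dot c$. Writing $c(t)=g\exp(z+tX)$, the left side is immediate: since $\mathfrak p^+$ is abelian, $c(0)^{-1}c(t) = \exp(-z)\exp(z+tX) = \exp(tX)$, so $\delta^{\ell}c\,|_0 = X \in \mathfrak p^+$.

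On the right side I would apply the product rule for $\delta^{\ell}$ to the three factors $A=\exp(g\cdot z)$, $B=\tilde{k}(g,z)$, $C=\exp Y(g,z)$. At $t=0$ this gives
$$
X = \Ad(\exp(-Y))\,\Ad(\tilde{k}^{-1})\,w' \;+\; \Ad(\exp(-Y))\,\kappa \;+\; Y',
$$
where $w' = D_X(g\cdot z)\in\mathfrak p^+$, $\kappa = \delta^{\ell}\tilde{k}\,|_0\in \mathfrak k^\mathbb C$, and $Y' = D_X Y(g,z)\in\mathfrak p^-$; here abelianness of $\mathfrak p^{\pm}$ is exactly what lets me identify $\delta^{\ell}\exp(w(t))$ and $\delta^{\ell}\exp(Y(t))$ with the plain derivatives $\dot w$ and $\dot Y$. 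Because $Y\in\mathfrak p^-$ lowers the grading by one we have $(\ad Y)^3 = 0$, so $\Ad(\exp(-Y)) = 1 - \ad Y + \tfrac12(\ad Y)^2$ is a finite sum, while $\Ad(\tilde{k}^{-1})$ preserves each graded summand.

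The heart of the argument is to expand and separate the three graded parts. Put $u = \Ad(\tilde{k}^{-1})w'\in\mathfrak p^+$. Tracking grades, the $\mathfrak p^+$-part of the right side is $u$, its $\mathfrak k^\mathbb C$-part is $-[Y,u]+\kappa$, and its $\mathfrak p^-$-part is $\tfrac12[Y,[Y,u]] - [Y,\kappa] + Y'$. Matching against the left side $X\in\mathfrak p^+$ forces $u=X$, then $\kappa = [Y,u] = [Y,X]$, and finally $\tfrac12[Y,[Y,X]] - [Y,[Y,X]] + Y' = 0$, i.e. $Y' = \tfrac12[Y,[Y,X]]$, which is the second assertion. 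For the first, the relation $\kappa=[Y,X]$ says the left logarithmic derivative of $t\mapsto\tilde{k}(g,z+tX)$ at $0$ is $[Y,X]$; hence the right logarithmic derivative of $t\mapsto\tilde{k}(g,z+tX)^{-1}$ is $-[Y,X]$, and applying $\tau$ yields $D_X\tau(\tilde{k}^{-1}) = -\tau([Y,X])\,\tau(\tilde{k}^{-1})$.

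I expect the only real obstacle to be organizational: keeping left versus right logarithmic derivatives straight across the threefold product and the inversion in $\tilde{k}^{-1}$, and justifying the reduction of $\delta^{\ell}\exp$ to an ordinary derivative (precisely where $[\mathfrak p^{\pm},\mathfrak p^{\pm}]=0$ is used). Once the grading is invoked to split the differentiated identity, the outcome is forced and all that remains are the finite bracket computations displayed above.
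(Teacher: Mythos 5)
Your computation is correct and is exactly the argument the paper alludes to (it gives no details, only citing a refinement of Satake, p.~65): differentiate the decomposition $g\exp(z+tX)=\exp(g\cdot(z+tX))\,\tilde k(g,z+tX)\exp Y(g,z+tX)$ at $t=0$, use $\Ad(\exp(-Y))=1-\ad Y+\tfrac12(\ad Y)^2$ on $\mathfrak p^++\mathfrak k^\mathbb C$, and match graded components; the sign bookkeeping for $\tilde k^{-1}$ and the reduction of $\delta^{\ell}\exp$ to $\dot w$, $\dot Y$ via $[\mathfrak p^{\pm},\mathfrak p^{\pm}]=0$ are handled correctly. Both asserted identities follow as you derive them.
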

This is proved by refining the arguments of \cite[p. 65]{S} 
\section{The main results about vector bundles}
If in the set up of Section 1 each subspace  $V_\lambda$ is irreducible under $\mathfrak k^\mathbb C$ we call the corresponding representations and the vector bundles {\em filiform}. We consider this case first.

We have seen that  every indecomposable filiform representation is a  direct sum of subspaces $V_{\lambda-j},$ which we denote by $V_j,$ carrying an irreducible representation $\rho_j^0$ of $\mathfrak k^\mathbb C\:(0 \leq j \leq m),$ furthermore, we have non-zero $\mathfrak k^\mathbb C$-equivariant maps $\rho_j^-:\mathfrak p^-\to \Hom(V_{j-1}, V_j).$ The space of such maps is $1$-dimensional: This is an equivalent restatement of the known fact that $\mathfrak p^-\otimes V_{j-1}$ as a representation of $\mathfrak k^\mathbb C$ is multiplicity free \cite[Corollary 4.4]{Jak}. We denote the orthogonal projection from $\mathfrak p^- \otimes V_{j-1}$ to $V_j$ by $P_j.$ 
We define for $Y\in \mathfrak p^-,\,v\in V_{j-1},$
\begin{equation} \label{2}
\tilde{\rho}_j(Y)v=P_j(Y\otimes v).
\end{equation}
Then $\tilde{\rho}_j$ has the $\mathfrak k^\mathbb C$-equivariant property, and it follows that $\rho_j^- = y_j \tilde{\rho}_j$ with some $y_j\not = 0.$ We write $y=(y_1, \ldots, y_m)$ and denote by $E^y$ the induced vector bundle. We observe here that the vector bundle $E^y$ is uniquely determined by $\rho_0^0, P_1,\ldots , P_m$ and $y,$ but these data cannot be arbitrarily chosen: The $\tilde{\rho}_j\:(1 \leq j \leq m)$ together must give a representation of the abelian Lie algebra $\mathfrak p^-.$ In terms of $P_j,$ this condition amounts to 
\begin{equation}\label{3}
P_{j+1}\big (Y^\prime \otimes P_j(Y\otimes v) \big )= P_{j+1}\big (Y \otimes P_j(Y^\prime\otimes v) \big )
\end{equation}
for all $Y,Y^\prime \in \mathfrak p^-$ and $v\in V_{j-1}.$

We denote by $\iota$ the identification of $(\mathfrak p^+)^*$ with $\mathfrak p^-$ under the Killing form, and for any vector space $W,$ extend it to a map from $\Hom(\mathfrak p^+, W)$ to $\mathfrak p^-\otimes W,$ that is, for $Y\in \mathfrak p^-, w\in W,$ 
$$
\iota(B(\cdot,Y)w) = Y\otimes w.
$$
\begin{lem} \label{lem 2.1}
Given $\rho^0_{j-1}, \rho^0_{j},$ as above, there exists a 
constant $c_j,$ independent of $\lambda,$ such that for all $Y\in \mathfrak p^-$, we have
$$
P_j \iota \rho^0_{j-1}([Y,\cdot]) = (c_j- \tfrac{\lambda-j+1}{2n}) \tilde{\rho}_j(Y).
$$
\end{lem}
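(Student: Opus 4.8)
The plan is to rewrite the left-hand side as a single operator on $\mathfrak p^-\otimes V_{j-1}$, peel off the contribution of the center $\mathbb C\hat z$ of $\mathfrak k^{\mathbb C}$ (which will carry the entire $\lambda$-dependence), and dispatch the remainder by Schur's lemma using that $V_j$ occurs in $\mathfrak p^-\otimes V_{j-1}$ with multiplicity one. Fix dual bases $\{E_\alpha\}\subset\mathfrak p^+$, $\{F_\alpha\}\subset\mathfrak p^-$ with $B(E_\alpha,F_\beta)=\delta_{\alpha\beta}$. Since $X=\sum_\alpha B(X,F_\alpha)E_\alpha$ for $X\in\mathfrak p^+$, the element $X\mapsto\rho^0_{j-1}([Y,X])v$ of $\Hom(\mathfrak p^+,V_{j-1})$ equals $\sum_\alpha B(\cdot,F_\alpha)\,\rho^0_{j-1}([Y,E_\alpha])v$, so by the defining property of $\iota$,
$$\iota\,\rho^0_{j-1}([Y,\cdot])\,v=\sum_\alpha F_\alpha\otimes\rho^0_{j-1}([Y,E_\alpha])\,v=:S(Y\otimes v),$$
which defines a linear operator $S$ on $\mathfrak p^-\otimes V_{j-1}$. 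The assertion thus becomes $P_jS=(c_j-\tfrac{\lambda-j+1}{2n})P_j$; evaluating at $Y\otimes v$ and recalling $\tilde\rho_j(Y)v=P_j(Y\otimes v)$ then gives the lemma.

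To isolate the $\lambda$-dependence I would split $\mathfrak k^{\mathbb C}=\mathbb C\hat z\oplus\mathfrak k_{\mathrm ss}^{\mathbb C}$ and write $[Y,E_\alpha]=a_\alpha\hat z+W_\alpha$ with $W_\alpha\in\mathfrak k_{\mathrm ss}^{\mathbb C}$. Because $\hat z$ is $B$-orthogonal to $\mathfrak k_{\mathrm ss}^{\mathbb C}$ and $\ad(\hat z)$ acts by $\pm i$ on $\mathfrak p^{\pm}$, invariance of $B$ gives $a_\alpha=-i\,B(Y,E_\alpha)/B(\hat z,\hat z)$. Using $\rho^0_{j-1}(\hat z)=i(\lambda-j+1)$, the identity $\sum_\alpha B(Y,E_\alpha)F_\alpha=Y$, and $B(\hat z,\hat z)=\tr\ad(\hat z)^2=-2n$ (the eigenvalues of $\ad(\hat z)$ being $\pm i$ on $\mathfrak p^{\pm}$ and $0$ on $\mathfrak k^{\mathbb C}$), the central part of $S$ equals $-\tfrac{\lambda-j+1}{2n}\,\mathrm{id}$. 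Hence $S=-\tfrac{\lambda-j+1}{2n}\,\mathrm{id}+S_{\mathrm ss}$, where $S_{\mathrm ss}(Y\otimes v)=\sum_\alpha F_\alpha\otimes\rho^0_{j-1}(W_\alpha)v$ depends only on $\rho^0_{j-1}|_{\mathfrak k_{\mathrm ss}^{\mathbb C}}$, which is independent of $\lambda$ since only the central character of $\rho^0_{j-1}$ varies with $\lambda$.

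It remains to show $S$ is $\mathfrak k^{\mathbb C}$-equivariant and to invoke Schur. For $Z\in\mathfrak k^{\mathbb C}$, the Jacobi identity together with $[\rho^0_{j-1}(Z),\rho^0_{j-1}([Y,E_\alpha])]=\rho^0_{j-1}([Z,[Y,E_\alpha]])$ reduces $Z\cdot S(Y\otimes v)-S(Z\cdot(Y\otimes v))$ to $\sum_\alpha[Z,F_\alpha]\otimes\rho^0_{j-1}([Y,E_\alpha])v+\sum_\alpha F_\alpha\otimes\rho^0_{j-1}([Y,[Z,E_\alpha]])v$, and this vanishes once $[Z,E_\alpha]$ and $[Z,F_\alpha]$ are re-expanded in the two bases and the relation $B([Z,E_\alpha],F_\beta)=-B(E_\alpha,[Z,F_\beta])$ (i.e.\ the $\ad(\mathfrak k^{\mathbb C})$-invariance of $\sum_\alpha E_\alpha\otimes F_\alpha\in\mathfrak p^+\otimes\mathfrak p^-$) is used. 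As $\mathrm{id}$ is equivariant, so is $S_{\mathrm ss}$, and since $P_j$ is the orthogonal projection onto the single copy of the irreducible $V_j$ in $\mathfrak p^-\otimes V_{j-1}$, the space of $\mathfrak k^{\mathbb C}$-equivariant maps $\mathfrak p^-\otimes V_{j-1}\to V_j$ is one-dimensional; therefore $P_jS_{\mathrm ss}=c_jP_j$ with $c_j$ independent of $\lambda$. Combining the two contributions yields $P_jS=(c_j-\tfrac{\lambda-j+1}{2n})P_j$, as required. I expect the main obstacle to be the equivariance verification—tracking the Killing-form duality between the two bases and applying the invariance of $\sum_\alpha E_\alpha\otimes F_\alpha$ correctly—together with confirming that the central part produces precisely the coefficient $-\tfrac{1}{2n}$; everything else is a direct appeal to Schur's lemma.
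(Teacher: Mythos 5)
Your proof is correct and is precisely the argument the paper itself indicates, since its entire proof of this lemma is the remark that it ``follows from $\mathfrak k^{\mathbb C}$-equivariance and a character computation'': your equivariance check plus Schur's lemma (via multiplicity-freeness of $\mathfrak p^-\otimes V_{j-1}$, which yields the $\lambda$-independent constant $c_j$) is the first ingredient, and your computation on the center $\mathbb C\hat z$, using $\rho^0_{j-1}(\hat z)=i(\lambda-j+1)$ and $B(\hat z,\hat z)=\tr\ad(\hat z)^2=-2n$, is the character computation producing the coefficient $-\tfrac{\lambda-j+1}{2n}$. All the details (the dual-basis expansion defining $S$, the sign of $a_\alpha$, and the vanishing in the equivariance verification) check out, so nothing further is needed.
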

This follows from $\mathfrak k^\mathbb C$-equivariance and a character computation. The following two lemmas can be proved by computations based on Lemmas \ref{lem 1.2} and \ref{lem 2.1}.
\begin{lem}
For all $1 \leq j \leq m-1,$ and holomorphic $F:\mathcal D\to V_j$,
\begin{eqnarray*}
\lefteqn{P_{j+1} \iota D^{(z)} \big (\rho_j^0 (\tilde{k}(g,z)^{-1})F(g z) \,\big)}\\
&=& -(c_{j+1}-\tfrac{\lambda-j}{2n})\tilde{\rho}_{j+1} \big (Y(g,z)\big )\big (\rho_j^0 (\tilde{k}(g,z)^{-1})F(g z) \,\big) + \rho_{j+1}^0 (\tilde{k}(g,z)^{-1}) \big ( (P_{j+1} \iota D F)(gz)\big ),\end{eqnarray*}
where $D^{(z)}$ denotes the differentiation with respect to $z.$ 
\end{lem}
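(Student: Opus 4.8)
The plan is to apply the product rule to the $\mathfrak p^+$-derivative of the section $H(z):=\rho_j^0(\tilde k(g,z)^{-1})F(g\cdot z)$ and then to recognize the two resulting terms, after applying $P_{j+1}\iota$, as the two terms on the right-hand side. For $X\in\mathfrak p^+$ I would write
$$D_X H(z) = \big(D_X\rho_j^0(\tilde k(g,z)^{-1})\big)F(g\cdot z) + \rho_j^0(\tilde k(g,z)^{-1})\,D_X\big(F(g\cdot z)\big).$$
Applying $P_{j+1}\iota$ to the map $X\mapsto D_X H(z)$ and treating the two summands separately, the first will produce the $\tilde\rho_{j+1}$-term and the second the $\rho_{j+1}^0$-term.

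For the first summand I would use the first identity of Lemma \ref{lem 1.2} with $\tau=\rho_j^0$, giving
$$\big(D_X\rho_j^0(\tilde k(g,z)^{-1})\big)F(g\cdot z) = -\rho_j^0\big([Y(g,z),X]\big)\,\rho_j^0(\tilde k(g,z)^{-1})F(g\cdot z) = -\rho_j^0\big([Y(g,z),X]\big)H(z).$$
Regarding $H(z)\in V_j$ as a fixed vector and $X\mapsto\rho_j^0([Y(g,z),X])H(z)$ as an element of $\Hom(\mathfrak p^+,V_j)$, I apply $P_{j+1}\iota$ and invoke Lemma \ref{lem 2.1} with $j$ replaced by $j+1$ (so that $\rho_{j-1}^0,c_j,\tfrac{\lambda-j+1}{2n},\tilde\rho_j$ become $\rho_j^0,c_{j+1},\tfrac{\lambda-j}{2n},\tilde\rho_{j+1}$), obtaining
$$P_{j+1}\iota\big(X\mapsto\rho_j^0([Y(g,z),X])H(z)\big) = \big(c_{j+1}-\tfrac{\lambda-j}{2n}\big)\tilde\rho_{j+1}\big(Y(g,z)\big)H(z).$$
Together with the minus sign, this is precisely the first term on the right-hand side.

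For the second summand I would use the chain rule. The complex Jacobian of $z\mapsto g\cdot z$ at $z$, as a map $\mathfrak p^+\to\mathfrak p^+$, equals $\Ad(\tilde k(g,z))$; this follows by differentiating the defining relation $g\exp z=(\exp g\cdot z)\,\tilde k(g,z)\exp Y(g,z)$ and projecting onto $\mathfrak p^+$, a computation of the same type as Lemma \ref{lem 1.2}. Hence $D_X(F(g\cdot z))=DF(g\cdot z)\big(\Ad(\tilde k(g,z))X\big)$, so the second summand is $X\mapsto\rho_j^0(\tilde k(g,z)^{-1})\,DF(g\cdot z)\big(\Ad(\tilde k(g,z))X\big)$. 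Writing $k=\tilde k(g,z)$ and $w=g\cdot z$, this map is $k^{-1}\cdot DF(w)$ for the natural action $(k\cdot B)(X)=\rho_j^0(k)B(\Ad(k^{-1})X)$ of $\tilde K^\mathbb C$ on $\Hom(\mathfrak p^+,V_j)$. Now both $\iota:\Hom(\mathfrak p^+,V_j)\to\mathfrak p^-\otimes V_j$ (via the Killing-form identification $(\mathfrak p^+)^*\cong\mathfrak p^-$) and $P_{j+1}:\mathfrak p^-\otimes V_j\to V_{j+1}$ are $\mathfrak k^\mathbb C$-equivariant, and since $\tilde K^\mathbb C$ is connected this lifts to $\tilde K^\mathbb C$-equivariance. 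Therefore
$$P_{j+1}\iota\big(k^{-1}\cdot DF(w)\big)=\rho_{j+1}^0(k^{-1})\,P_{j+1}\iota DF(w)=\rho_{j+1}^0(\tilde k(g,z)^{-1})\big((P_{j+1}\iota DF)(g\cdot z)\big),$$
which is the second term; adding the two contributions gives the asserted identity.

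The main obstacle is the second summand: one must pin down the Jacobian formula $\partial_z(g\cdot z)=\Ad(\tilde k(g,z))$ and then keep the three equivariant actions --- on $\Hom(\mathfrak p^+,V_j)$, on $\mathfrak p^-\otimes V_j$, and on $V_{j+1}$ --- consistent, so that pulling $\rho_j^0(k^{-1})\circ(\cdot)\circ\Ad(k)$ through $P_{j+1}\iota$ produces exactly $\rho_{j+1}^0(k^{-1})$. The first summand, by contrast, is a direct substitution into Lemmas \ref{lem 1.2} and \ref{lem 2.1}.
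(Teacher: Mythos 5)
Your proposal is correct and follows exactly the route the paper indicates: the paper (an announcement) omits the details, stating only that the lemma ``can be proved by computations based on Lemmas \ref{lem 1.2} and \ref{lem 2.1},'' and your argument --- product rule, then Lemma \ref{lem 1.2} plus Lemma \ref{lem 2.1} (with index shifted) for the first term, and the Jacobian formula $\partial_z(g\cdot z)=\Ad(\tilde k(g,z))|_{\mathfrak p^+}$ together with $\tilde K^{\mathbb C}$-equivariance of $P_{j+1}\iota$ for the second --- is precisely that computation. In particular your identification of the Jacobian is right (it falls out of the same first-order expansion of $g\exp(z+tX)$ that proves Lemma \ref{lem 1.2}, cf.\ \cite[p.~65]{S}), so there is no gap.
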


\begin{lem}
For all $1 \leq j \leq m-1,$ with the constants $c_j$  of Lemma \ref{lem 2.1}, 
$$
P_{j+1}\iota D^{(z)} \tilde{\rho}_j(Y(g,z)) = \frac{1}{2} (c_j - c_{j+1}-\tfrac{1}{2n}) \tilde{\rho}_{j+1} (Y(g,z))\tilde{\rho}_j(Y(g,z)).
$$ 
\end{lem}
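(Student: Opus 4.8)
The plan is to differentiate $\tilde\rho_j(Y(g,z))$ directly, expand the result with the $\mathfrak k^\mathbb C$-equivariance of $\tilde\rho_j$, and then reduce both resulting pieces to Lemma \ref{lem 2.1}. Write $Y=Y(g,z)$ throughout. Since $\tilde\rho_j$ is linear in its $\mathfrak p^-$-argument (see \eqref{2}), the second formula of Lemma \ref{lem 1.2} gives, for $X\in\mathfrak p^+$,
$$
D_X^{(z)}\tilde\rho_j(Y)=\tilde\rho_j\big(D_X Y\big)=\tfrac12\,\tilde\rho_j\big([Y,[Y,X]]\big).
$$
The inner bracket $W:=[Y,X]$ lies in $\mathfrak k^\mathbb C$, so the whole expression is a well-defined element of $\Hom(V_{j-1},V_j)$ depending on $X$, and I can evaluate it using equivariance.

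First I would record the equivariance identity. Since $P_j$ intertwines the $\mathfrak k^\mathbb C$-action $\ad(Z)\otimes 1+1\otimes\rho_{j-1}^0(Z)$ on $\mathfrak p^-\otimes V_{j-1}$ with $\rho_j^0(Z)$ on $V_j$, one gets
$$
\tilde\rho_j([Z,Y'])=\rho_j^0(Z)\tilde\rho_j(Y')-\tilde\rho_j(Y')\rho_{j-1}^0(Z),\qquad Z\in\mathfrak k^\mathbb C,\ Y'\in\mathfrak p^-.
$$
Applying this with $Z=W=[Y,X]$ and $Y'=Y$, and using $[Y,W]=-[W,Y]$, yields
$$
\tilde\rho_j\big([Y,[Y,X]]\big)=\tilde\rho_j(Y)\rho_{j-1}^0([Y,X])-\rho_j^0([Y,X])\tilde\rho_j(Y).
$$
Thus $D_X^{(z)}\tilde\rho_j(Y)$ splits into two terms, and it remains to apply $P_{j+1}\iota$ (which acts in the $X$-variable, for each fixed $v\in V_{j-1}$) to each.

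The first term, coming from $-\rho_j^0([Y,\cdot])\tilde\rho_j(Y)$, is handled immediately by Lemma \ref{lem 2.1} with $j$ replaced by $j+1$: it contributes $-\big(c_{j+1}-\tfrac{\lambda-j}{2n}\big)\tilde\rho_{j+1}(Y)\tilde\rho_j(Y)$. The second term, from $\tilde\rho_j(Y)\rho_{j-1}^0([Y,\cdot])$, is the main obstacle, because the fixed operator $\tilde\rho_j(Y)$ sits between $\iota$ and the inner expression and so blocks a direct appeal to Lemma \ref{lem 2.1}. To clear it, I would fix dual bases $\{e_\alpha\}\subset\mathfrak p^+$ and $\{f_\alpha\}\subset\mathfrak p^-$ with $B(e_\alpha,f_\beta)=\delta_{\alpha\beta}$, so that $\iota(\phi)=\sum_\alpha f_\alpha\otimes\phi(e_\alpha)$ and $\iota$ commutes with post-composition by any linear map. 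Then $P_{j+1}\iota$ of this term equals $\sum_\alpha\tilde\rho_{j+1}(f_\alpha)\tilde\rho_j(Y)\rho_{j-1}^0([Y,e_\alpha])$. Here the commutativity \eqref{3}, in the form $\tilde\rho_{j+1}(f_\alpha)\tilde\rho_j(Y)=\tilde\rho_{j+1}(Y)\tilde\rho_j(f_\alpha)$, lets me pull $\tilde\rho_{j+1}(Y)$ outside the sum, leaving $\tilde\rho_{j+1}(Y)\sum_\alpha\tilde\rho_j(f_\alpha)\rho_{j-1}^0([Y,e_\alpha])=\tilde\rho_{j+1}(Y)\,P_j\iota\rho_{j-1}^0([Y,\cdot])$. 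Now Lemma \ref{lem 2.1} applies unshifted, giving $\big(c_j-\tfrac{\lambda-j+1}{2n}\big)\tilde\rho_{j+1}(Y)\tilde\rho_j(Y)$.

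Finally I would collect the two contributions with the overall factor $\tfrac12$:
$$
P_{j+1}\iota D^{(z)}\tilde\rho_j(Y)=\tfrac12\Big[\big(c_j-\tfrac{\lambda-j+1}{2n}\big)-\big(c_{j+1}-\tfrac{\lambda-j}{2n}\big)\Big]\tilde\rho_{j+1}(Y)\tilde\rho_j(Y),
$$
and the bracket simplifies to $c_j-c_{j+1}-\tfrac1{2n}$, which is the asserted identity. The only delicate point is the second term: the role of \eqref{3} is precisely to commute $\tilde\rho_j(Y)$ past the $f_\alpha$ so that the dual-basis sum reassembles into $P_j\iota\rho_{j-1}^0([Y,\cdot])$ and Lemma \ref{lem 2.1} becomes applicable.
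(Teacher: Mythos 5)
Your proof is correct and follows exactly the route the paper indicates (the paper omits the details, saying only that the lemma follows "by computations based on Lemmas \ref{lem 1.2} and \ref{lem 2.1}"): you differentiate $Y(g,z)$ via the second formula of Lemma \ref{lem 1.2}, expand $\tilde\rho_j([Y,[Y,X]])$ by $\mathfrak k^\mathbb C$-equivariance, and reduce both pieces to Lemma \ref{lem 2.1} (once shifted, once unshifted), with the commutativity condition \eqref{3} supplying the key swap $\tilde\rho_{j+1}(f_\alpha)\tilde\rho_j(Y)=\tilde\rho_{j+1}(Y)\tilde\rho_j(f_\alpha)$. The bookkeeping of the constants $(c_j-\tfrac{\lambda-j+1}{2n})-(c_{j+1}-\tfrac{\lambda-j}{2n})=c_j-c_{j+1}-\tfrac{1}{2n}$ is also right.
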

Now let $E^y$ be an indecomposable filiform hHhvb as described above. Writing $0=(0,\ldots,0),$ $E^0$ makes sense, it is the direct sum of irreducible  vector bundles in the composition series of $E^y.$  

If $f\in \Hol(\mathcal D, V),$ we write $f_j$ for the component of $f$ in $V_j,$ that is, the projection of $f$ onto $V_j.$

\begin{thm} \label{2.4}
Assume that in $E^y,$ the constants $c_j$ of Lemma \ref{lem 2.1} are of the form 
\begin{myequation}\label{myeqn:1}
c_j = u + (j-1) v,\:1\leq j \leq m,
\end{myequation} 
with some constants $u, v$ and $\lambda$ is regular in the sense that 
$$
c_{jk}=\tfrac{1}{(j-k)!} \prod_{i=1}^{j-k} \big \{ u - \tfrac{\lambda}{2n}+ \tfrac{2k+i-1}{2}\big (v+\tfrac{1}{2n} \big )\big \}^{-1}
$$
is meaningful for $0\leq k \leq j \leq m.$
Then the operator $\Gamma: \Hol(\mathcal D, V) \to \Hol(\mathcal D, V)$ given by 
$$
(\Gamma f_j)_\ell  = \begin{cases}  c_{\ell j}\, y_\ell \cdots y_{j+1} (P_\ell \iota D)  \cdots (P_{j+1} \iota D) f_j & \mbox{\!\!\!\rm if~} \ell > j,\\
f_j \:\mbox{~\rm if~} \ell = j, \\
0 \:\:\:\mbox{~\rm if~}\ell< j&
\end{cases}$$
intertwines the actions of $\tilde{G}$ on the trivialized sections of $E^0$ and $E^y$.
\end{thm}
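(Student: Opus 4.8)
The plan is to verify the operator identity $U^y_g \circ \Gamma = \Gamma \circ U^0_g$ directly for every $g \in \tilde G$, where $U^0_g$ and $U^y_g$ denote the actions of $\tilde G$ on $\Hol(\mathcal D, V)$ attached to the multipliers $\rho^0(\tilde b(g,z))$ and $\rho(\tilde b(g,z))$ of $E^0$ and $E^y$. The point is that the two lemmas following Lemma \ref{lem 2.1} are exactly the commutation relations needed to move the derivatives $P_{j+1}\iota D$ through the group action, so no reduction to generators of $\tilde G$ is required: the computation can be carried out for arbitrary $g$ and $z$ at once.

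First I would record the two actions componentwise. Since $\tilde b = \tilde k \exp Y$ and $\rho^-=0$ on $E^0$, the action $U^0_g$ is block diagonal: on a $V_j$-valued section it is $F \mapsto \rho^0_j(\tilde k(g,z)^{-1}) F(g\cdot z)$. For $E^y$ the multiplier carries in addition the unipotent factor $\exp\rho^-(Y(g,z))$, whose off-diagonal blocks are the $y_\ell\,\tilde\rho_\ell(Y(g,z))$; thus $U^y_g$ is $U^0_g$ twisted by this unipotent cocycle. With this in hand, Lemma 2.2 reads, for a $V_j$-valued $F$,
$$P_{j+1}\iota D\,(U^0_g F) = -\big(c_{j+1}-\tfrac{\lambda-j}{2n}\big)\,\tilde\rho_{j+1}(Y(g,z))\,(U^0_g F) + U^0_g\big(P_{j+1}\iota D\, F\big),$$
since the last term of Lemma 2.2 is precisely $U^0_g(P_{j+1}\iota D\,F)$. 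In other words the equivariant operator $P_{j+1}\iota D$ commutes with $U^0_g$ up to an error term carrying a single factor $\tilde\rho_{j+1}(Y(g,z))$, while Lemma 2.3 records how a further $P\iota D$ differentiates such a factor $\tilde\rho_j(Y(g,z))$.

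Next I would evaluate $\Gamma U^0_g f$ on a single homogeneous component $f=f_j$. Its $V_\ell$-component is the scalar $c_{\ell j}\,y_\ell\cdots y_{j+1}$ times $(P_\ell\iota D)\cdots(P_{j+1}\iota D)(U^0_g f_j)$. Applying Lemma 2.2 to the innermost factor and then using Lemmas 2.2 and 2.3 repeatedly to move every derivative to the right of $U^0_g$, this expands as a sum of terms of the uniform shape
$$\big(\text{product of }\tilde\rho(Y(g,z))\text{ factors}\big)\;U^0_g\;\big((P\iota D)\cdots(P\iota D)f_j\big),$$
with explicit scalars assembled from the constants $c_{j+1}-\tfrac{\lambda-j}{2n}$ and $\tfrac12(c_j-c_{j+1}-\tfrac{1}{2n})$ of Lemmas 2.2 and 2.3. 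On the other side, $U^y_g(\Gamma f_j)$ produces terms of exactly the same shape: the unipotent off-diagonal blocks of the $E^y$-multiplier supply the $\tilde\rho(Y(g,z))$ factors together with their $y$'s, while the diagonal part $U^0_g$ acts on the compositions $(P\iota D)\cdots(P\iota D)f_j$ coming from $\Gamma$.

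Finally I would match the two expansions term by term. Equating the coefficient of each monomial in the $\tilde\rho(Y(g,z))$ factors reduces the whole statement to a finite system of scalar recursions among the $c_{\ell j}$ and the $c_j$. This is where the hypotheses enter: consistency of these recursions across the different orders in which the derivatives may be commuted forces the successive differences $c_{j+1}-c_j$ to be constant, which is exactly the arithmetic-progression condition \eqref{myeqn:1}; granting it, the recursion has a unique solution, and a direct check shows that solution to be the stated product $c_{\ell j}=\tfrac{1}{(\ell-j)!}\prod\{\cdots\}^{-1}$, with regularity of $\lambda$ ensuring that every inverted factor is nonzero so that $\Gamma$ is well defined. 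I expect this last matching — the combinatorial reconciliation of all the error terms produced by the iterated commutations, and the verification that the closed-form $c_{\ell j}$ solves the resulting recursion — to be the main obstacle. By contrast, equivariance under $\tilde K$ (the case $g\in\tilde K$, $z=0$) is immediate from the $\mathfrak k^\mathbb C$-equivariance of the $P_\ell$ and of $\iota$, and serves only as a consistency check on the computation.
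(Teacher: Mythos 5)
Your proposal is correct and is essentially the paper's own proof: the paper disposes of Theorem \ref{2.4} with the single line ``The proof is by induction based on the preceding lemmas,'' and your plan carries out exactly that --- using the two lemmas after Lemma \ref{lem 2.1} to commute each $P\iota D$ past the $E^0$-action and to differentiate the resulting $\tilde\rho(Y(g,z))$ factors, then matching the monomials in $\tilde\rho(Y(g,z))$ against the unipotent blocks of the $E^y$-multiplier, the resulting recursion for the $c_{\ell j}$ being the paper's induction. The only surplus is your derivation of the necessity of $(\#)$ from consistency of the recursions, which the paper states separately as a remark; this is harmless and consistent with the paper.
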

The proof is by induction based on the preceding lemmas.

We note that condition \eqref{myeqn:1} is vacuous when $m=1$ or $2.$ Otherwise, it can be shown that \eqref{myeqn:1} is also necessary for the theorem to hold.  

Next we pass from the filiform case to the general case. Now $(\rho_0,V_0)$ is a direct sum of representations $({\rho_j^0}^\alpha, V_j^\alpha)$ with inequivalent irreducible representations $\alpha$ of $\mathfrak k_{\rm ss}^\mathbb C,$ and ${\rho_j^0}^\alpha=\chi_{\lambda-j}(I_{m_{j\alpha}} \otimes \alpha).$ For pairs of $(\alpha, \beta)$ that are admissible in the sense that$\beta \subset \Ad_{\mathfrak p^-} \otimes \alpha$ we write $P_{\alpha \beta}$ for the corresponding projection and define maps $\tilde{\rho}_{\alpha \beta}$ for $Y\in \mathfrak p^-.$ Then
$$
\rho_j^-(Y) = \oplus_{\alpha,\beta} y_j^{\alpha \beta} \otimes \tilde{\rho}_{\alpha\beta}(Y)
$$
with $y_j^{\alpha \beta} \in \Hom(\mathbb C^{m_\alpha},\mathbb C^{m_\beta})$ such that $y_{j+1}^{\beta\gamma}y_{j}^{\alpha\beta}=0$ unless $(\alpha\beta)$ and $(\beta\gamma)$ are admissible and the analogue of \eqref{3} holds. We let $\mathbb E^y$ denote the bundle holomorphically induced by $\rho,$ and let $\mathbb E^0$ be the (direct sum) bundle gotten by changing all the $y^{\alpha\beta}$ to $0.$ The general version of $\Gamma$ is now going to be  (for $j < \ell$)
$$
(\Gamma f_j)_\ell = {\oplus}_{{{\alpha_j}, \ldots, {\alpha_\ell}}} c_{\ell j}^{{\alpha_j}, \ldots, {\alpha_\ell}} \big (y_\ell^{{\alpha_{\ell-1}} {\alpha_\ell}} \cdots y_{j+1}^{{\alpha_j}{\alpha_{j+1}}}\big ) \otimes  \big (P_{{\alpha_{\ell-1}}{\alpha_\ell}} \iota D \big )                 
\cdots \big (P_{{\alpha_{j}}{j+1}} \iota D \big ) f_j^\alpha.                 
$$
For $j \geq \ell,$ it is unchanged. 

\begin{thm}\label{thm 2.5}
Suppose that $(\#)$ holds for all indecomposable filiform subbundles of $\mathbb E^y.$ Then there exist constants $c_{\ell j}^{{\alpha_j}, \ldots, {\alpha_\ell}}$ such that $\Gamma$ intertwines the actions of $\tilde{G}$ on the trivialized sections of $\mathbb E^0$ and $\mathbb E^y.$
\end{thm}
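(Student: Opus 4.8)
The plan is to reduce the general statement to the filiform Theorem \ref{2.4} by exploiting the fact that every operator entering the intertwining relation respects the decomposition of $V$ into \emph{chains} of admissible components. Write $V_j = \oplus_\alpha V_j^\alpha$ as in the paragraph preceding the theorem. The multiplier $\rho(\tilde b(g,z)) = \rho^0(\tilde k(g,z))\rho^-(\exp Y(g,z))$ acts through ${\rho_j^0}^\alpha=\chi_{\lambda-j}(I_{m_{j\alpha}} \otimes \alpha)$, which is diagonal in $\alpha$ since the $\alpha$ are pairwise inequivalent irreducibles of $\mathfrak k_{\rm ss}^\mathbb C$, and through $\rho_j^-(Y) = \oplus_{\alpha\beta} y_j^{\alpha\beta}\otimes\tilde\rho_{\alpha\beta}(Y)$, which carries $V_{j-1}^\alpha$ into $V_j^\beta$ only across admissible pairs. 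The building blocks $P_{\alpha\beta}\iota D$ of $\Gamma$ likewise move $V^\alpha$ at level $j$ into $V^\beta$ at level $j+1$ only when $(\alpha,\beta)$ is admissible. Consequently, projecting the intertwining identity onto a fixed target component at level $\ell$ and tracing it back through the levels expresses it as a sum over chains $\alpha_j\to\alpha_{j+1}\to\cdots\to\alpha_\ell$ of consecutively admissible pairs, with the off-chain terms annihilated by the relations $y_{j+1}^{\beta\gamma}y_j^{\alpha\beta}=0$.

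First I would establish the component-wise analogues of Lemma \ref{lem 2.1} and of the two computational lemmas that follow it. The character computation behind Lemma \ref{lem 2.1} depends only on the admissible pair $(\alpha,\beta)$, so it yields, for each such pair, a constant $c_{\alpha\beta}$ independent of $\lambda$ with $P_{\alpha\beta}\,\iota\,{\rho_{j-1}^0}^{\alpha}([Y,\cdot]) = (c_{\alpha\beta} - \tfrac{\lambda-j+1}{2n})\tilde\rho_{\alpha\beta}(Y)$. The equivariance identity \eqref{1} holds on each component by the very construction of $\tilde\rho_{\alpha\beta}$, so the product-rule cross-terms produced when differentiating the multiplier again couple only admissible pairs, and the two computational lemmas go through verbatim with $c_j,\tilde\rho_j$ replaced by $c_{\alpha\beta},\tilde\rho_{\alpha\beta}$. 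The $y$-matrices $y_j^{\alpha\beta}\in\Hom(\mathbb C^{m_\alpha},\mathbb C^{m_\beta})$ ride along as inert tensor factors throughout these computations, exactly as the scalars $y_j$ did in the filiform case.

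With these in hand, fix a chain $C:\alpha_j\to\cdots\to\alpha_\ell$. The sub-representation of $\rho$ spanned by the corresponding components is indecomposable and filiform, and (up to passing to a constituent) it is one of the indecomposable filiform subbundles of $\mathbb E^y$ to which the hypothesis applies. Thus its constants $c_{\alpha_{i-1}\alpha_i}$ satisfy $(\#)$, i.e. they form an arithmetic progression $u_C + (i-1)v_C$, and $\lambda$ is regular for $C$. Theorem \ref{2.4}, whose inductive proof uses only the two computational lemmas now available per chain, then supplies constants $c_{\ell j}^{\alpha_j,\ldots,\alpha_\ell}$, given by the chain version of the product formula for $c_{jk}$, for which the single-chain piece of the intertwining identity holds. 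Summing over all chains assembles these into the operator $\Gamma$ displayed before the theorem and verifies that it intertwines the two actions on the trivialized sections of $\mathbb E^0$ and $\mathbb E^y$.

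The main obstacle is the bookkeeping in the second and third paragraphs: one must confirm that the differentiation in the computational lemmas does not leak between distinct chains and that the composite factors $y_\ell^{\alpha_{\ell-1}\alpha_\ell}\cdots y_{j+1}^{\alpha_j\alpha_{j+1}}$ combine with the operator factors $P_{\alpha_{\ell-1}\alpha_\ell}\iota D\cdots$ so that each chain contributes independently and the per-chain constants glue into a single well-defined $\Gamma$. This is precisely where the relation $y_{j+1}^{\beta\gamma}y_j^{\alpha\beta}=0$ and the multiplicity-free property underlying the uniqueness of $\tilde\rho_{\alpha\beta}$ are used; once the decoupling along chains is secured, the substance of the argument is identical to the filiform case.
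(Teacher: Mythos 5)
Your proposal is sound and follows exactly the route the paper intends: the paper (an announcement) omits the proof of Theorem \ref{thm 2.5}, but its setup --- the chain-indexed constants $c_{\ell j}^{\alpha_j,\ldots,\alpha_\ell}$, the definition of $\Gamma$ as a sum over admissible chains, and the hypothesis phrased in terms of filiform subbundles --- makes clear that the intended argument is precisely your chain-by-chain reduction to Theorem \ref{2.4}, with the component-wise versions of Lemma \ref{lem 2.1} and the two computational lemmas, the inertness of the $y^{\alpha\beta}$ tensor factors, and the multiplicity-freeness securing the decoupling. The only looseness is your claim that the span of a chain's components is itself a subrepresentation (it need not be closed under $\rho^-$, which can branch to other admissible $\gamma$); but since the constants $c_{\alpha\beta}$ depend only on the admissible pair, the hypothesis still delivers the arithmetic-progression condition along every chain occurring in $\Gamma$, so this does not affect the argument.
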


We note that if $\mathcal D$ is the disc in one variable then $(\#)$ always holds with $u=v=0.$ So, Theorem \ref{thm 2.5} contains Theorem 3.1 of \cite{KM}. We proceed to discuss cases where \eqref{myeqn:1} is satisfied. 


The Cartan product of two irreducible representations of $\mathfrak k^\mathbb C$ is the irreducible component of the tensor product whose highest weight is the sum of the highest weights of the original representations. 
\begin{lem}\label{lem 2.5}
Let $\rho_0^0$ be any irreducible representation of $\mathfrak k^\mathbb C$ and define $\rho_j^0\:(1\leq j \leq m)$ inductively as the Cartan product of $\Ad_{\mathfrak p^-}$ and $\rho^0_{j-1}.$ Then with $\tilde{\rho}_j$ as in \eqref{2} and $\rho_j^- = y_j\tilde{\rho}_j\: (y_j \not = 0)$ we obtain a filiform representation $\rho$ of $\mathfrak k^\mathbb C + \mathfrak p^-.$ In this case \eqref{myeqn:1} is automatically satisfied. 
\end{lem}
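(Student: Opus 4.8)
The statement contains two assertions: that the inductively defined data assemble into a genuine representation of $\mathfrak k^\mathbb C+\mathfrak p^-$, i.e. that \eqref{3} holds, and that the constants $c_j$ of Lemma \ref{lem 2.1} then satisfy \eqref{myeqn:1}. The common thread is that Cartan products make the highest weights grow linearly: writing $\nu$ for the highest weight of $\Ad_{\mathfrak p^-}$ and $\mu_0$ for that of $\rho_0^0$, the representation $\rho_j^0$ has highest weight $\mu_j=\mu_0+j\nu$. Splitting $\mathfrak k^\mathbb C=\mathfrak z\oplus\mathfrak k_{\rm ss}^\mathbb C$, this says the $\hat z$-eigenvalue of $V_j$ is $i(\lambda-j)$ and its $\mathfrak k_{\rm ss}^\mathbb C$-highest weight is $\sigma_0+j\nu_{\rm ss}$, affine in $j$ with $\lambda$-independent coefficients. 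I would record this at the outset.

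First I would verify \eqref{3}. Setting $\beta(Y',Y)=P_{j+1}\big(Y'\otimes P_j(Y\otimes v)\big)=\tilde\rho_{j+1}(Y')\tilde\rho_j(Y)v$, the condition is precisely symmetry of $\beta$ in $Y,Y'$. As each $P_\ell$ is $\mathfrak k^\mathbb C$-equivariant, $\beta$ defines an equivariant map $\mathfrak p^-\otimes\mathfrak p^-\otimes V_{j-1}\to V_{j+1}$, and its antisymmetric part is its restriction to $\Lambda^2\mathfrak p^-\otimes V_{j-1}$. By Schur's lemma it is enough to show that $V_{j+1}$, of highest weight $\mu_{j-1}+2\nu$, does not occur in $\Lambda^2\mathfrak p^-\otimes V_{j-1}$. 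Every weight of the latter is $\preceq(\nu+\nu')+\mu_{j-1}$, where $\nu'$ is the second highest weight of the $\mathfrak k^\mathbb C$-irreducible $\mathfrak p^-$; since $\nu-\nu'$ is a nonzero sum of positive roots, $2\nu\succ\nu+\nu'$, whence $\mu_{j-1}+2\nu\not\preceq\mu_{j-1}+\nu+\nu'$ and $V_{j+1}$ cannot appear. (The symmetric part does carry $V_{j+1}$, because $2\nu$ is the highest weight of $S^2\mathfrak p^-$; this is why the $\tilde\rho_j$ are nonzero.) Hence $\beta$ is symmetric, \eqref{3} holds, and combined with the equivariance \eqref{1}, which is built into the definition of $\tilde\rho_j$, we obtain a filiform representation.

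Next I would establish \eqref{myeqn:1} by exhibiting $c_j$ as a difference of quadratic Casimir eigenvalues, extending the character computation underlying Lemma \ref{lem 2.1}. Fixing dual bases $\{X_a\}\subset\mathfrak p^+$ and $\{Y_a\}\subset\mathfrak p^-$ under $B$, one rewrites $\iota\rho_{j-1}^0([Y,\cdot])v=\sum_a Y_a\otimes\rho_{j-1}^0([Y,X_a])v$ and recognizes, after projecting by $P_j$, the action on $V_j\subset\mathfrak p^-\otimes V_{j-1}$ of the mixed Casimir $\sum_i\ad(\xi_i)\otimes\rho_{j-1}^0(\xi^i)$ (with $\{\xi_i\},\{\xi^i\}$ dual bases of $\mathfrak k^\mathbb C$), together with the central $\hat z$-contribution. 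Its eigenvalue on $V_j$ is $\tfrac12\big(\kappa(\mu_j)-\kappa(\mu_{j-1})-\kappa(\nu)\big)$, where $\kappa(\mu)=\langle\mu,\mu+2\delta\rangle$ and $\delta$ is the half-sum of positive roots. Since $\kappa$ is quadratic and $\mu_j=\mu_0+j\nu$ is affine in $j$, the difference $\kappa(\mu_j)-\kappa(\mu_{j-1})$ is affine in $j$. The entire $\lambda$-dependence sits in the $\hat z$-block of these eigenvalues and reduces to the explicit term $\tfrac{\lambda-j+1}{2n}$ already isolated in Lemma \ref{lem 2.1} (the normalization $\tfrac1{2n}$ reflecting that $\sum_a[Y_a,X_a]$ is a multiple of $\hat z$); removing it leaves $c_j$ $\lambda$-independent and affine in $j$, that is $c_j=u+(j-1)v$, which is \eqref{myeqn:1}.

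The main obstacle is the identification in the previous paragraph: carrying out the character computation so as to match the Lemma \ref{lem 2.1} constant $c_j-\tfrac{\lambda-j+1}{2n}$ with the mixed-Casimir eigenvalue, and checking that the $\hat z$-contribution accounts for all of the $\lambda$-dependence. Once this is in place, the affineness of $c_j$ in $j$ is forced purely by the linear growth $\mu_j=\mu_0+j\nu$ of the highest weights, and no case analysis over the classification of domains is needed. The only other structural input, the weight inequality $2\nu\succ\nu+\nu'$ used for \eqref{3}, holds uniformly because $\mathfrak p^-$ is $\mathfrak k^\mathbb C$-irreducible for every bounded symmetric domain.
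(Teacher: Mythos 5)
Your proposal is correct and takes exactly the route the paper indicates: the paper omits the argument, saying only that both statements ``are proved by using weight theory,'' and your dominance-order argument for \eqref{3} (the Cartan weight $\mu_{j-1}+2\nu$ cannot occur in $\Lambda^2\mathfrak p^-\otimes V_{j-1}$, forcing symmetry by Schur) together with the mixed-Casimir computation identifying $c_j-\tfrac{\lambda-j+1}{2n}$ with $\tfrac12\big(\kappa(\mu_j)-\kappa(\mu_{j-1})-\kappa(\nu)\big)$, whose discrete derivative in $j$ is affine, is precisely such a proof. One small repair: since the weight poset is only partially ordered, ``the second highest weight $\nu'$'' need not be well defined or dominate all other weights; replace that step by the cleaner observation that every weight of $\Lambda^2\mathfrak p^-$ is a sum of two weights of distinct weight vectors, at most one of which can equal $\nu$ (as $\nu$ has multiplicity one), hence is strictly below $2\nu$, which yields the same conclusion.
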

Both the statements in this Lemma are proved by using weight theory. 

Finally, we specialize to the case where $\mathcal D$ is the unit ball in $\mathbb C^2.$ Then $G=SU(2,1),$ $\mathfrak k_{\rm ss}^\mathbb C=\mathfrak{sl}(2,\mathbb C).$ It is well known that the irreducible representations of $\mathfrak{sl}(2,\mathbb C)$ are just the symmetric tensor powers of the natural representation, which is equivalent to $\Ad^\prime_{\mathfrak p^-},$ the restriction of $\Ad_{\mathfrak p^-}$ to $\mathfrak k_{\rm ss}^\mathbb C.$
Consequently, a complete description of indecomposable representations of $\mathfrak k^\mathbb C + \mathfrak p^-$ is possible in this case.
In the following, an exponent in brackets, $[k],$ denotes the ${k}$-{th} symmetric tensor power. 

\begin{prop} \label{prop 2.7}
For the complex ball in $\mathbb C^2,$ there are only two types of indecomposable filiform representations of $\mathfrak k^\mathbb C+ \mathfrak p^-.$ For the first type, $\rho^0_{j} = \chi_{\lambda-j}  \otimes (\Ad^\prime_{\mathfrak p^-})^{[k+j]},\:0\leq j \leq m,$ with some $\lambda\in \mathbb R$ and $k,m\in \mathbb N.$ For the second type, $\rho_j^0=\chi_{\lambda-j}\otimes (\Ad^\prime_{\mathfrak p^-})^{[k-j]},\:0\leq j \leq m,$ with $\lambda \in \mathbb R,$ and $m \leq k.$  For both types the condition \eqref{myeqn:1} is satisfied.
\end{prop}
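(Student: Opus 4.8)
The plan is to reduce everything to the representation theory of $\mathfrak{sl}(2,\mathbb{C})$ and then to exploit the commutativity of the $\mathfrak{p}^-$-action. Since here $\mathfrak{k}^\mathbb{C} = \mathbb{C}\hat z \oplus \mathfrak{sl}(2,\mathbb{C})$, every irreducible constituent is $V_j = \chi_{\lambda-j}\otimes (\Ad^\prime_{\mathfrak{p}^-})^{[n_j]}$ for a single integer $n_j\ge 0$, and $\mathfrak{p}^- \cong \chi_{-1}\otimes (\Ad^\prime_{\mathfrak{p}^-})^{[1]}$. Taking the chain structure $V_0,\dots,V_m$ as already established for indecomposable filiform representations, the existence of the nonzero equivariant map $\rho_j^-\colon \mathfrak{p}^-\to \Hom(V_{j-1},V_j)$ forces $V_j$ to occur in $\mathfrak{p}^-\otimes V_{j-1}$. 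By Clebsch--Gordan, $\mathfrak{p}^-\otimes V_{j-1} = \chi_{\lambda-j}\otimes\big((\Ad^\prime_{\mathfrak{p}^-})^{[n_{j-1}+1]}\oplus (\Ad^\prime_{\mathfrak{p}^-})^{[n_{j-1}-1]}\big)$, so the central character is automatically $\lambda-j$ and $n_j = n_{j-1}\pm 1$ (the minus sign only when $n_{j-1}\ge1$). Thus the data reduce to a lattice walk $n_0,\dots,n_m$ with steps $\pm1$, and the two families in the statement are exactly the two monotone walks $n_j = k+j$ and $n_j = k-j$, the latter constrained by $n_m\ge0$, i.e. $m\le k$, with $k=n_0$.

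Next I would show that only monotone walks survive. Because $\mathfrak{p}^-$ is abelian, $\rho^-(Y)$ and $\rho^-(Y')$ must commute (this is condition \eqref{3}); the component of $[\rho^-(Y),\rho^-(Y')]$ carrying $V_{j-1}$ to $V_{j+1}$ is antisymmetric in $(Y,Y')$ and hence is a $\mathfrak{k}^\mathbb{C}$-equivariant map $\wedge^2\mathfrak{p}^-\otimes V_{j-1}\to V_{j+1}$. As $\wedge^2\mathfrak{p}^- \cong \chi_{-2}\otimes (\Ad^\prime_{\mathfrak{p}^-})^{[0]}$, both sides have central character $\lambda-j-1$, so the obstruction is an $\mathfrak{sl}(2,\mathbb{C})$-map $(\Ad^\prime_{\mathfrak{p}^-})^{[n_{j-1}]}\to (\Ad^\prime_{\mathfrak{p}^-})^{[n_{j+1}]}$. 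For a monotone triple $n_{j+1}=n_{j-1}\pm2\neq n_{j-1}$ this $\Hom$-space is zero and commutativity is automatic; for a zigzag $n_{j+1}=n_{j-1}$ it is one-dimensional, so the obstruction is a scalar multiple of the identity, equal to $y_{j+1}y_j$ times the purely representation-theoretic number obtained by composing the two Clebsch--Gordan projections (multiplication for the ascending step, contraction against the invariant form for the descending step) and antisymmetrizing. I expect the nonvanishing of this number to be the main obstacle: evaluating on a highest weight vector $f^{n}$ should give a nonzero multiple of the identity --- in the up-then-down case a nonzero constant times $n+2$, and similarly in the down-then-up case. Since $y_{j+1}y_j\neq0$ in the filiform situation, no zigzag can occur, and the walk is one of the two monotone types.

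Finally I would verify condition \eqref{myeqn:1} for each type. For the increasing type, $(\Ad^\prime_{\mathfrak{p}^-})^{[k+j]}$ is the component of $\mathfrak{p}^-\otimes V_{j-1}$ whose highest weight is the sum of those of $\Ad^\prime_{\mathfrak{p}^-}$ and of $\rho_{j-1}^0$, so $\rho_j^0$ is precisely the Cartan product of $\Ad_{\mathfrak{p}^-}$ with $\rho_{j-1}^0$; this type is therefore governed by Lemma \ref{lem 2.5}, which already yields \eqref{myeqn:1}. For the decreasing type I would instead compute the constants $c_j$ of Lemma \ref{lem 2.1} directly: the character computation expresses $c_j$ through the action of the $\mathfrak{sl}(2,\mathbb{C})$-part of $[Y,X]$ followed by the descent (contraction) projection, and evaluation on a highest weight vector shows that $c_j$ is an affine function of the weight $n_{j-1}$. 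Since $n_{j-1}=k-j+1$ is affine in $j$, the resulting $c_j$ has the required form $u+(j-1)v$, which is \eqref{myeqn:1}. The one genuinely computational ingredient is the nonvanishing of the antisymmetrized Clebsch--Gordan scalar in the second step; the rest is bookkeeping with $\mathfrak{sl}(2,\mathbb{C})$-weights.
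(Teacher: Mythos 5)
Your proposal is correct, and its skeleton --- the Clebsch--Gordan dichotomy ``up or down'' at each step, exclusion of mixed walks via the commutativity condition \eqref{3}, and Lemma \ref{lem 2.5} for the ascending type --- is the same as the paper's. You diverge in two places, both worth noting. First, where the paper merely asserts that a mixed walk violates \eqref{3}, you supply the mechanism: the obstruction is antisymmetric in $(Y,Y^\prime)$, hence is an element of $\Hom_{\mathfrak k^{\mathbb C}}\big(\wedge^2\mathfrak p^-\otimes V_{j-1},V_{j+1}\big)$, and since $\wedge^2\mathfrak p^-\cong\chi_{-2}\otimes(\Ad^\prime_{\mathfrak p^-})^{[0]}$ this space vanishes for a monotone triple and is one-dimensional for a zigzag; the antisymmetrized scalar you ``expect'' to be nonzero really is (a short computation with multiplication and contraction on binary forms gives $\mp(n+2)$ for up-then-down and $\pm n$ for down-then-up, the latter nonzero because a down step forces $n\geq 1$), so filling in this computation closes the one gap you flagged. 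As a bonus, the vanishing of the obstruction space for monotone triples gives the existence of both types with no further argument, a point the paper instead gets from Lemma \ref{lem 2.5} and duality. Second, for the descending type the paper argues by contragredience --- the dual of an ascending (Cartan-product) chain is a descending chain, and \eqref{myeqn:1} passes through --- whereas you compute the constants $c_j$ of Lemma \ref{lem 2.1} directly. Your route is sound: evaluating on highest weight vectors shows the down-step constant is, up to normalization independent of $j$, an affine function of $n_{j-1}$ (a multiple of $n_{j-1}+2$), and $n_{j-1}=k-j+1$ is affine in $j$, so $c_j=u+(j-1)v$ as required. The trade-off: your computation is essentially the same one needed for the zigzag scalar, so little extra cost, and it yields explicit values of $u$ and $v$; the paper's duality argument avoids all computation but requires the (easy, unstated) check that dualizing reverses the chain and preserves the affine form of the $c_j$.
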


\begin{proof}[Sketch of the proof] By Clebsch-Gordan, for $j\geq 1,$
$$\Ad^\prime_{\mathfrak p^-} \otimes (\Ad^\prime_{\mathfrak p^-})^{[j]} = (\Ad^\prime_{\mathfrak p^-})^{[j+1]} \oplus (\Ad^\prime_{\mathfrak p^-})^{[j-1]},$$
So for each $P_j$ there are two possibilities, ``up'' or ``down''. One can show that unless all are up or all are down, the condition in equation \eqref{3} will fail. 

When all $P_j$ are ``up'' (the first type), we are in the situation of Lemma \ref{lem 2.5}, so \eqref{myeqn:1} holds. The second type is the contragredient of  a representation of the first type: it follows that \eqref{myeqn:1} holds again.
\end{proof}
\begin{thm}
In the case of the unit ball in $\mathbb C^2$ the conclusion of Theorem \ref{thm 2.5} holds for all indecomposable hHhvb-s with regular $\lambda.$   
\end{thm}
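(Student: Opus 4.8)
The plan is to obtain this theorem as a direct consequence of Theorem~\ref{thm 2.5}, whose hypothesis I must verify in the present geometry. Recall that Theorem~\ref{thm 2.5} guarantees the existence of the intertwining operator $\Gamma$ as soon as condition $(\#)$, that is \eqref{myeqn:1}, holds for every indecomposable filiform subbundle of the given bundle $\mathbb E^y$. Thus the whole proof reduces to showing that, over the unit ball in $\mathbb C^2$, \emph{every} indecomposable filiform subbundle of $\mathbb E^y$ satisfies $(\#)$.

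To do this I would first note that an indecomposable filiform subbundle of $\mathbb E^y$ is holomorphically induced by an indecomposable filiform representation of $\mathfrak k^\mathbb C + \mathfrak p^-$: its successive levels $V_j$ are irreducible under $\mathfrak k^\mathbb C$ by the filiform property, and under the induction correspondence of Section~1 indecomposability of the subbundle is the same as indecomposability of the inducing representation. Consequently every such subbundle is one of the objects classified by Proposition~\ref{prop 2.7}. That proposition asserts that over the $\mathbb C^2$ ball the only indecomposable filiform representations are the ``all up'' chains $\rho_j^0 = \chi_{\lambda-j}\otimes(\Ad^\prime_{\mathfrak p^-})^{[k+j]}$ and the ``all down'' chains $\rho_j^0 = \chi_{\lambda-j}\otimes(\Ad^\prime_{\mathfrak p^-})^{[k-j]}$, and that for both types condition \eqref{myeqn:1} is satisfied. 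Hence $(\#)$ holds for every indecomposable filiform subbundle, and the hypothesis of Theorem~\ref{thm 2.5} is met.

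It then remains only to invoke the conclusion of Theorem~\ref{thm 2.5}, which furnishes the constants $c_{\ell j}^{\alpha_j,\ldots,\alpha_\ell}$ and the operator $\Gamma$. Here the regularity of $\lambda$ enters: each filiform thread $\alpha_j,\ldots,\alpha_\ell$ contributes constants built, as in Theorem~\ref{2.4}, from products of inverse factors that are meaningful precisely when $\lambda$ is regular. The hypothesis ``$\lambda$ regular'' for $\mathbb E^y$ is exactly the simultaneous validity of these conditions along every thread, so all the $c_{\ell j}^{\alpha_j,\ldots,\alpha_\ell}$ are well defined and $\Gamma$ intertwines the $\tilde G$-actions on the trivialized sections of $\mathbb E^0$ and $\mathbb E^y$.

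I expect the only genuinely delicate point to be the identification in the second paragraph, namely that passing from the full bundle $\mathbb E^y$ to one of its filiform subbundles cannot produce a chain lying outside the two types of Proposition~\ref{prop 2.7}. This is guaranteed by the Clebsch--Gordan step in the proof of that proposition, where any thread mixing an ``up'' transition with a ``down'' transition is shown to violate \eqref{3}; so the two pure types are the only possibilities, and checking them is immediate. Apart from this bookkeeping, the argument is a formal combination of Theorem~\ref{thm 2.5} and Proposition~\ref{prop 2.7}.
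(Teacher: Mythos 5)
Your proposal is correct and follows exactly the paper's route: the paper proves this theorem in one line by observing that Proposition \ref{prop 2.7} shows the hypothesis of Theorem \ref{thm 2.5} is satisfied, which is precisely your argument with the bookkeeping (subbundles correspond to indecomposable filiform representations, regularity of $\lambda$ makes the constants $c_{\ell j}^{\alpha_j,\ldots,\alpha_\ell}$ meaningful) spelled out.
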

This is because Proposition \ref{prop 2.7} shows that the condition of Theorem \ref{thm 2.5} is satisfied.
\section{Hilbert spaces of sections}
With notations preserved, for general $\mathcal D,$ we consider first the case where $\rho$ is irreducible. Then automatically $\rho^0$ is also irreducible and
$\rho^-=0.$ We write $\rho^0 = \chi_\lambda\otimes \sigma,$ where $\sigma$ is an irreducible representation of $\mathfrak k_{\rm ss}.$ For every $\sigma,$ there is an (explicitly known) set of $\lambda$-s such that the sections of the corresponding holomorphically induced vector bundle have a $\tilde{G}$-invariant inner product. This is Harish-Chandra's holomorphic discrete series and its analytic continuation. In the canonical trivialization it gives Hilbert spaces $\mathcal H_{\rho^0}= \mathcal H_{\sigma,\lambda}$ which are known to have reproducing kernels $K_{\sigma,\lambda}(z,w)$. If we set 
$$
\tilde{\mathcal K}(z,w) = \tilde{k}(\exp - \bar{w},z),
$$  
(where $\bar{w}$ denotes conjugation with respect to $\mathfrak g$) we have, slightly extending \cite[Chap II, \S 5]{S}
$$K_{\sigma,\lambda}(z,w) = (\chi_\lambda \otimes\sigma)(\tilde{\mathcal K}(z,w)).$$
In particular, it is known that the inner product is \emph{regular} in the sense that all $K$ - types (i.e polynomials) have non-zero norm in $\mathcal H_{\sigma, \lambda}$ if and only if $\lambda < \lambda_\sigma$ for a certain known constant $\lambda_\sigma.$ 

In the following theorem we consider a bundle $\mathbb E^y$ as in Section 2. The corresponding $\mathbb E^0$ is then a direct sum of irreducible bundles as above. Its sections have a $\tilde{G}$-invariant inner product if and only if this is true for each summand. In this case, we have a Hilbert space $\mathcal H^0 = \oplus\mathcal H_{\rho^0_j}.$
\begin{thm} \label{3.1}
Suppose $(\#)$ holds for all filiform subbundles of $\mathbb E^y.$ Then the sections of $\mathbb E^y$ have a $\tilde{G}$-invariant regular inner-product if and only if the same is true for $\mathbb E^0.$ In this case, the map $\Gamma$ is a unitary isomorphism of $\mathcal H^0$ onto the Hilbert space $\mathcal H^y$ of sections of $\mathbb E^y.$ The space $\mathcal H^y$ (as well as $\mathcal H^0$) has a reproducing kernel.  
\end{thm}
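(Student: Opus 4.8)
The plan is to exploit the intertwining operator $\Gamma$ constructed in Theorem \ref{thm 2.5} as the bridge between the two bundles. First I would recall that $\Gamma$ is already known (from Theorem \ref{thm 2.5}, whose hypothesis $(\#)$ we are assuming) to intertwine the $\tilde G$-actions on the trivialized sections of $\mathbb E^0$ and $\mathbb E^y$. Since $\Gamma$ is built block-by-block from the operators $(P_\ell \iota D)\cdots(P_{j+1}\iota D)$, it is a lower-triangular, $\tilde G$-equivariant isomorphism of $\Hol(\mathcal D, V)$ onto itself whose diagonal blocks are the identity. Because the diagonal is the identity, $\Gamma$ is invertible as a map of holomorphic sections regardless of the inner-product structure. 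The whole theorem then reduces to transporting the inner-product question across this fixed equivariant isomorphism.

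For the equivalence of the two regularity statements, I would argue as follows. Suppose $\mathbb E^0$ has a $\tilde G$-invariant regular inner product; equivalently, as noted just before the theorem, each summand $\mathcal H_{\rho^0_j}$ is a regular holomorphic-discrete-series-type space, so the condition $\lambda - j < \lambda_{\sigma_j}$ holds for every $j$. I would define a candidate inner product on the sections of $\mathbb E^y$ by pushing forward the inner product of $\mathcal H^0$ through $\Gamma$, i.e.\ declaring $\langle \Gamma f, \Gamma g\rangle_{\mathcal H^y} := \langle f, g\rangle_{\mathcal H^0}$. Since $\Gamma$ is $\tilde G$-equivariant and the source inner product is $\tilde G$-invariant, the resulting form is automatically $\tilde G$-invariant. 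The point to verify is that this form is genuinely an inner product coming from the bundle structure (positive definite on all $K$-types, hence regular): here the triangular-with-identity-diagonal form of $\Gamma$ is decisive, because it guarantees that the span of each finite-dimensional $K$-type is preserved up to lower-order terms and that no $K$-type is collapsed to zero. The converse direction (regular inner product on $\mathbb E^y$ forces one on $\mathbb E^0$) is obtained symmetrically by using $\Gamma^{-1}$, which is again lower-triangular with identity diagonal and $\tilde G$-equivariant.

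Once the inner products exist and correspond under $\Gamma$, unitarity is immediate by construction: the identity $\langle \Gamma f, \Gamma g\rangle_{\mathcal H^y} = \langle f, g\rangle_{\mathcal H^0}$ is exactly the statement that $\Gamma:\mathcal H^0 \to \mathcal H^y$ is an isometry, and since $\Gamma$ is a bijection on holomorphic sections it is a unitary isomorphism. For the existence of a reproducing kernel for $\mathcal H^y$, I would note that $\mathcal H^0 = \oplus_j \mathcal H_{\rho^0_j}$ has a reproducing kernel, namely the block-diagonal kernel $\oplus_j K_{\sigma_j,\lambda-j}$ assembled from the known kernels $K_{\sigma,\lambda}(z,w) = (\chi_\lambda\otimes\sigma)(\tilde{\mathcal K}(z,w))$ recalled in Section 3. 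Transporting a reproducing kernel through a unitary that is given by a differential operator is standard: the kernel of $\mathcal H^y$ is obtained by applying $\Gamma$ in the $z$-variable and $\Gamma$ (via its formal adjoint, i.e.\ the conjugated operator) in the $w$-variable to the kernel of $\mathcal H^0$, yielding $K^y(z,w) = \Gamma_z \overline{\Gamma_w}\, K^0(z,w)$.

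The main obstacle I anticipate is the positivity and non-degeneracy step in the middle paragraph --- namely, showing that the transported form is \emph{regular}, not merely invariant and nonzero. The subtlety is that $\Gamma$ mixes components across the filtration (it is triangular but not diagonal), so one must confirm that the off-diagonal coupling introduced by the operators $(P_\ell\iota D)$ cannot conspire to make some $K$-type degenerate in $\mathcal H^y$ while it was non-degenerate in $\mathcal H^0$. The identity-diagonal structure of $\Gamma$ is what rescues this: because $\Gamma$ acts as the identity plus strictly-lower-triangular terms, it is invertible on each graded piece, and invertibility of $\Gamma$ together with invertibility of $\Gamma^{-1}$ forces the $K$-type norms on the two sides to be simultaneously positive or simultaneously degenerate. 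Pinning down this simultaneity cleanly --- most likely by reducing to the filiform subbundles, where condition $(\#)$ applies and the constants $c_{\ell j}$ are explicit --- is the technical heart of the argument, and everything else follows formally from the equivariance of $\Gamma$.
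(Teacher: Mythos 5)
Your proposal follows essentially the same route as the paper's proof: the paper likewise observes that $\Gamma$ has an inverse of the same (triangular, identity-diagonal) form, pushes the Hilbert space structure of $\mathcal H^0$ forward through the equivariant differential operator $\Gamma$, and notes that the image is then a Hilbert space of holomorphic sections with a reproducing kernel which one verifies to be $\mathcal H^y$. Your additional care about regularity of the transported inner product (via invertibility of $\Gamma$ on polynomials, i.e. on $K$-types) is exactly the verification the paper leaves implicit in ``one can verify that this is the sought after $\mathcal H^y$.''
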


For the proof one observes that $\Gamma$ has an inverse of the same form (only the constants $c_{jk}$ change). 
$\Gamma$ being a holomorphic differential operator, the image of $\mathcal H^0$ is also a Hilbert space of holomorphic functions with a reproducing kernel. One can verify that this is the sought after $\mathcal H^y.$  

\begin{thm} \label{3.2}
Suppose $\mathcal D$ is the unit ball in $\mathbb C^n.$ Let $\sigma_0, \sigma_1$ be the irreducible representations of $\mathfrak k^\mathbb C_{\rm ss}$ such that $\sigma_1 \subset \Ad_{\mathfrak p^-}\otimes \sigma_0$ and let $P$ be the corresponding projection.  Then if $\lambda < \lambda_{\sigma_0},$ we have $\lambda-1 < \lambda_{\sigma_1}$ and $P\iota D$ is a bounded linear transformation from $\mathcal H_{\sigma_0,\lambda}$ to $\mathcal H_{\sigma_1,\lambda-1}.$ 
\end{thm}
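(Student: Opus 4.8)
The plan is to use the maximal compact subgroup $K=U(n)$ of $SU(n,1)$ to reduce the boundedness of $P\iota D$ to a uniform estimate over $K$-types. Both multipliers restrict on $K$ to the linear action, so $\mathcal H_{\sigma_0,\lambda}$ and $\mathcal H_{\sigma_1,\lambda-1}$ are unitary $K$-modules graded by the holomorphic degree $d$; the degree-$d$ part of the first is $S^{d}\big((\mathfrak p^{+})^{*}\big)\otimes V_{\sigma_0}$, which Pieri's rule makes multiplicity-free under $U(n)$. Moreover the degree is determined by the type through the central $U(1)$-character, so each $U(n)$-type occurs at most once in the whole space. The operator $P\iota D$ is $K$-equivariant and lowers the degree by exactly one, mapping the degree-$d$ part of the source into the degree-$(d-1)$ part of the target; by Schur's lemma it therefore acts as a scalar on each $K$-type, and its images on distinct types are mutually orthogonal.

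To make this explicit I would expand the reproducing kernels. The identity $K_{\sigma,\lambda}(z,w)=(\chi_\lambda\otimes\sigma)(\tilde{\mathcal K}(z,w))$, together with the closed form of $\tilde{\mathcal K}$ for the ball --- a power of $1-\langle z,w\rangle$ times the $\sigma$-matrix factor built from the $SU(n,1)$ automorphy cocycle --- gives the squared norm of each $K$-type as an explicit product of Pochhammer symbols in $\lambda$ and in the highest weight $\mu$. The hypothesis $\lambda<\lambda_{\sigma_0}$ guarantees, through the regularity criterion recalled earlier in this section, that all of these norms are strictly positive and finite, so that
$$
\|P\iota D\|=\sup_{\mu}\;\frac{\|P\iota D\,e_\mu\|_{\sigma_1,\lambda-1}}{\|e_\mu\|_{\sigma_0,\lambda}},
$$
where $e_\mu$ is any nonzero vector of type $\mu$ in the source.

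It remains to bound this supremum. On the $K$-type of degree $d$ a direct computation shows that the square of the scalar is a factor of order $d$, coming from the single differentiation in $D$, times the Pochhammer ratio $(\nu)_{d}/(\nu+2n)_{d-1}$, where $(1-\langle z,w\rangle)^{-\nu}$ is the scalar kernel power: the unit decrease from $\lambda$ to $\lambda-1$ raises the Bergman weight $\nu$ by $2n$, in accordance with the normalizing factor $\tfrac1{2n}$ appearing in Lemma~\ref{lem 2.1}. Since $(\nu)_{d}/(\nu+2n)_{d-1}\sim c\,d^{\,1-2n}$ as $d\to\infty$, the square of the scalar is of order $d^{\,2-2n}$, which stays bounded for every $n\ge1$. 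Hence $P\iota D$ is bounded --- borderline for the disc $n=1$, and tending to zero, so that $P\iota D$ is even compact, for $n\ge2$.

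For the remaining inequality $\lambda-1<\lambda_{\sigma_1}$ I would use that $\lambda_\sigma$ is, for the ball, an explicit affine function of the highest weight of $\sigma$. Because $\sigma_1\subset\Ad_{\mathfrak p^-}\otimes\sigma_0$, the highest weight of $\sigma_1$ is that of $\sigma_0$ incremented by a weight of the standard representation; substituting this increment into the formula for $\lambda_\sigma$ yields $\lambda_{\sigma_1}\ge\lambda_{\sigma_0}-1$, and together with $\lambda<\lambda_{\sigma_0}$ this gives $\lambda-1<\lambda_{\sigma_1}$. I expect the genuine obstacle to be the estimate of the third paragraph: determining the vector-valued norm formulas precisely and checking that the leading Pochhammer asymptotics are governed by the scalar factor $d^{\,2-2n}$ uniformly in $\mu$ --- so that the highest-weight dependence does not spoil the bound --- is the real analytic content, the rest being algebra with the explicit reproducing kernel.
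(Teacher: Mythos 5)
Your first two paragraphs are sound, and in fact they amount to a repackaging of the paper's own reduction: the paper proves the theorem by exhibiting a constant $C>0$ for which $C\,K_{\sigma_1,\lambda-1}(z,w) - (P\iota D^{(z)})K_{\sigma_0,\lambda}(z,w)(P\iota D^{(w)})^*$ is positive definite, and, given the $K$-equivariance of $P\iota D$ and the multiplicity-freeness you correctly obtain from Pieri's rule plus the central $U(1)$-character, that kernel inequality is \emph{equivalent} to a uniform bound on your Schur scalars. So up to that point the two arguments differ only in organization, and the real content in either version is the explicit estimate.

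The gap is in your third paragraph, which you yourself flag as the crux, and the estimate sketched there is wrong. The claim that passing from $\lambda$ to $\lambda-1$ ``raises the Bergman weight $\nu$ by $2n$'' misreads Lemma~\ref{lem 2.1}: the $\tfrac1{2n}$ there is the Killing-form constant ($B(\hat z,\hat z)=-2n$ for $\mathfrak{su}(n,1)$), entering through $\iota$, not a kernel-weight increment. The character $\chi_\lambda$ contributes a power of $(1-\langle z,w\rangle)$ whose exponent changes by only $(n+1)/n$ per unit shift of $\lambda$ (for $n=1$ this equals $2$, which is why your formula is consistent with the disc and the error is invisible there), and the rest of the boundary singularity of $K_{\sigma_1,\lambda-1}=(\chi_{\lambda-1}\otimes\sigma_1)(\tilde{\mathcal K}(z,w))$ sits in the matrix factor $\sigma_1(\tilde{\mathcal K}(z,w))$, which blows up at the boundary and cannot be flattened into a scalar weight. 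Carrying out your own $K$-type computation with the correct matrix-valued kernel (take $\sigma_0$ trivial, $n=2$; the target kernel is then, up to a constant conjugation, $(1-\langle z,w\rangle)^{-\nu-1}(I-zw^*)^{-1}$), the squared scalar on the degree-$d$ type is $d\nu(\nu+1)/(\nu+d)$, which \emph{increases} to $\nu(\nu+1)>0$, exactly as for the disc. So the theorem is true, but the scalars do not decay like $d^{2-2n}$ and the operator is not compact for any $n$ --- it cannot be, since a $K$-equivariant operator whose scalars are bounded below on infinitely many $K$-types sends an infinite orthonormal family to an orthogonal family with norms bounded below. What your Pochhammer ratio $(\nu)_d/(\nu+2n)_{d-1}$ actually controls is the norm of $P\iota D$ into the strictly larger space with scalar kernel $(1-\langle z,w\rangle)^{-\nu-2n}I$; boundedness into that space is weaker than boundedness into $\mathcal H_{\sigma_1,\lambda-1}$, so the theorem does not follow from it. The compactness conclusion for $n\geq 2$ is the telltale sign that the target space was misidentified.
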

By the theory of reproducing kernels, for this it is enough to prove that ($D^{(z)}$ and $D^{(w)}$ denote the differentiation 
with respect to the variable $z$ and $w$ respectively)
$$
C K_{\sigma_1,\lambda-1}(z,w) - (P\iota D^{(z)})K_{\sigma_0,\lambda}(z,w) (P\iota D^{(w)})^*
$$
is positive definite for some $C>0.$ (In general, we say that a kernel $K$ taking values in $\Hom(V,V)$ is positive definite if, 
for any $z_1,\ldots , z_n$ in $\mathcal D$ and $v_,\ldots , v_n$ in $V,$
$$
\sum_{j,k=1}^n \langle {K(z_j,w_k)v_k},{v_j} \rangle\geq 0
$$
holds.) 
\begin{rem}\label{3.3}
When $\mathcal D$ is the unit ball  in $\mathbb C^2,$ the conditions of Theorem \ref{3.1} are satisfied for any indecomposable $\mathbb E^y.$ Furthermore, the spaces $\mathcal H^0$ and $\mathcal H^y$ are equal as sets. This follows from Theorem \ref{3.2} and the closed graph theorem.  
\end{rem}
\section{Homogeneous Cowen-Douglas pairs} For any bounded domain $\mathcal D \subseteq \mathbb C^n,$ the $n$-tuple $\boldsymbol T=(T_1,\ldots , T_n) $ of bounded linear operators on a Hilbert space $\mathcal H$ is said to be homogeneous (relative to the holomorphic automorphism group $\Aut(\mathcal D)$) if the joint (Taylor) spectrum of $\boldsymbol T$ is in $\overline{\mathcal D}$ and for every $g$ in $\Aut(\mathcal D),$
the $n$-tuple $g(\boldsymbol T)= g(T_1, \ldots ,T_n)$ is unitarily equivalent to $\boldsymbol T.$

Another important class of $n$-tuples of commuting operators associated with the domain $\mathcal D \subseteq \mathbb C^n$ is the extended Cowen-Douglas class 
${\mathrm B}_k^\prime(\mathcal D).$ Its elements are $n$-tuples $\boldsymbol T$ consisting of commuting operators $T_1, \ldots , T_n$ acting on some Hilbert space $\mathcal H$ such that (i) for every $z_1, \ldots , z_n$ in $\mathcal D,$ the joint kernel of $T_j-z_j$ ($1\leq j \leq n$) has the same dimension $k (\not = 0)$ and (ii) these joint kernels together generate $\mathcal H.$ The strict Cowen-Douglas class ${\mathrm B}_k(\mathcal D),$ as 
originally defined, satisfies a further ``closed range'' condition.  

The class ${\mathrm B}_k^\prime(\mathcal D)$ has practically all the good properties of ${\mathrm B}_k(\mathcal D).$ In particular for any $\boldsymbol T$ in ${\mathrm B}^\prime_k(\mathcal D)$ acting on the Hilbert space $\mathcal H,$ there exists a holomorphic Hermitian vector bundle and a Hilbert space $\hat{\mathcal H}$ of its sections with a reproducing kernel such that the commuting $n$-tuple $(M_1^*, \ldots, M_n^*),$ which is the adjoint of the $n$-tuple of multiplication by the coordinate functions on $\hat{\mathcal H},$ is unitarily equivalent to $\boldsymbol T$ under a unitary isomorphism of the Hilbert spaces $\mathcal H$ and $\hat{\mathcal H}.$ 

We wish to investigate, for bounded symmetric domains $\mathcal D,$ the homogeneous $n$-tuples in ${\mathrm B}^\prime_k(\mathcal D).$ For the case of the unit disc, there is a complete description and classification of these in \cite{KM}. (In that case, it turns out that the homogeneous operators  
in ${\mathrm B}^\prime_k(\mathbb D)$ are the same as in  ${\mathrm B}_k(\mathbb D).$)

\begin{thm}
For any bounded symmetric $\mathcal D,$ an $n$-tuple $\boldsymbol T$ in ${\mathrm B}^\prime_k(\mathcal D)$ is homogeneous if and only if the corresponding holomorphic Hermitian vector bundle is homogeneous under $\tilde{G}.$ 
\end{thm}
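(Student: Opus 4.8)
The plan is to read the theorem through the Cowen--Douglas correspondence, which the excerpt has already transported to the class ${\mathrm B}'_k(\mathcal D)$. To $\boldsymbol T$ one attaches its eigenbundle $E_{\boldsymbol T}$, the Hermitian holomorphic bundle whose fibre over $z\in\mathcal D$ is the joint kernel $\bigcap_{j}\ker(T_j-z_j)$ with the metric inherited from $\mathcal H$; up to a conjugation this is exactly the ``corresponding holomorphic Hermitian vector bundle'' of the statement, realised on the reproducing-kernel space $\hat{\mathcal H}$. The rigidity half of the correspondence says that $\boldsymbol T\sim\boldsymbol S$ if and only if $E_{\boldsymbol T}\cong E_{\boldsymbol S}$ as Hermitian holomorphic bundles, and I shall invoke it for ${\mathrm B}'_k(\mathcal D)$, which is legitimate since this class shares the bundle/section formalism of ${\mathrm B}_k(\mathcal D)$ recorded in the excerpt.

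The one new geometric input is the behaviour of the eigenbundle under $\Aut(\mathcal D)$. I would first record the lemma that, for every $g$ in the identity component $G$ of $\Aut(\mathcal D)$ (with $g=(g_1,\dots,g_n)$ its component functions), the biholomorphism $g$ carries $E_{\boldsymbol T}$ isomorphically onto $E_{g(\boldsymbol T)}$; equivalently $E_{g(\boldsymbol T)}\cong (g^{-1})^{*}E_{\boldsymbol T}$ as Hermitian holomorphic bundles. This is a one-line consequence of the Taylor functional calculus: if $\xi\in\bigcap_j\ker(T_j-z_j)$ then $f(\boldsymbol T)\xi=f(z)\xi$ for every $f$ holomorphic near the Taylor spectrum, so taking $f=g_j$ shows $\xi\in\bigcap_j\ker\big(g_j(\boldsymbol T)-g_j(z)\big)$. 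Thus the fibre of $E_{g(\boldsymbol T)}$ over $g(z)$ contains, and (running the argument for $g^{-1}$) equals, the fibre of $E_{\boldsymbol T}$ over $z$; these are the same subspace of $\mathcal H$ with the same induced inner product, so the identity map is the required isometric holomorphic bundle map covering $g$.

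With the lemma in hand both implications follow from rigidity. If $E_{\boldsymbol T}$ is $\tilde G$-homogeneous then restricting its $\tilde G$-action to each $g$ gives $(g^{-1})^{*}E_{\boldsymbol T}\cong E_{\boldsymbol T}$, hence $E_{g(\boldsymbol T)}\cong E_{\boldsymbol T}$, and rigidity returns $g(\boldsymbol T)\sim\boldsymbol T$; so $\boldsymbol T$ is homogeneous. Conversely, homogeneity of $\boldsymbol T$ furnishes, for each $g$, a unitary $U_g$ with $g_j(\boldsymbol T)=U_gT_jU_g^{-1}$, which maps the fibre of $E_{\boldsymbol T}$ over $z$ isometrically onto the fibre of $E_{g(\boldsymbol T)}$ over $z$; combined with the lemma this says precisely that the Hermitian holomorphic bundle $E_{\boldsymbol T}$ is invariant, up to isometric holomorphic isomorphism, under pullback by every element of $G$.

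The main obstacle is the last step of the converse: turning this $G$-invariance of $E_{\boldsymbol T}$ into a genuine $\tilde G$-action, which is what homogeneity in the sense of Section 1 demands. Here I would argue that, by rigidity, the intertwiner $U_g$ is unique up to the unitary group of the commutant $\{\boldsymbol T\}'$; for an indecomposable bundle this commutant is small enough that $g\mapsto U_g$ can be normalised to a projective unitary representation of $G$, and since $\tilde G$ is simply connected and semisimple this projective representation lifts to an honest unitary representation of $\tilde G$ on $\mathcal H$. That representation intertwines the multiplier action on the reproducing-kernel sections and so exhibits $E_{\boldsymbol T}$ as a $\tilde G$-homogeneous Hermitian holomorphic bundle, the general case following by decomposition into indecomposables. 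Since $G$ realises the identity component of $\Aut(\mathcal D)$, homogeneity relative to $\Aut(\mathcal D)$ and to $G$ coincide, and it is only the passage to the cover $\tilde G$ that requires the lifting argument.
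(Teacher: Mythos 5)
Your forward direction and the functional-calculus lemma identifying $E_{g(\boldsymbol T)}$ with $(g^{-1})^{*}E_{\boldsymbol T}$ are sound, and you have correctly located the crux of the converse: converting the one-unitary-per-$g$ data into a genuine $\tilde G$-action. But your treatment of that crux has two genuine gaps, and they sit exactly where the paper, which gives no argument beyond pointing to \cite[Theorem 2.1]{KM}, flags the proof as ``not entirely trivial.'' The first gap is the sentence ``the general case following by decomposition into indecomposables.'' Homogeneity of $\boldsymbol T$ produces, for each $g$, \emph{some} unitary $U_g$ intertwining $\boldsymbol T$ and $g(\boldsymbol T)$; nothing forces $U_g$ to respect an orthogonal decomposition $\boldsymbol T=\oplus_i \boldsymbol T_i$ into indecomposables, so it is not automatic that each $\boldsymbol T_i$ is homogeneous. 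To descend you need (i) a Krull--Schmidt uniqueness statement for such decompositions in ${\mathrm B}_k^\prime(\mathcal D)$ (true, because the self-adjoint commutant localizes injectively into the endomorphisms of a fibre and is therefore a finite-dimensional $C^*$-algebra), which only yields $g(\boldsymbol T_i)\sim \boldsymbol T_{\sigma_g(i)}$ for some permutation $\sigma_g$ of the inequivalent summands; and (ii) a proof that $g\mapsto\sigma_g$ is trivial, e.g.\ that the subgroup $\{g:\sigma_g=\mathrm{id}\}$ is open or closed so that connectedness of $G$ applies. Neither step appears in your proposal, and (ii) requires real work. Note also that a summand occurring with multiplicity $m>1$ has unitary commutant a full $U(m)$ rather than the circle, so the reduction must be organized so that your phase-uniqueness argument is only ever applied to a single indecomposable piece.

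The second gap concerns the lifting itself: Bargmann's theorem applies to \emph{continuous} (or at least Borel) projective unitary representations, whereas your $U_g$ is at this stage an arbitrary choice made separately for each $g$. Granting that indecomposability makes $U_g$ unique up to a scalar (correct: the self-adjoint commutant then has no nontrivial projections, hence is $\mathbb C I$), you must still show the scalars can be chosen so that $g\mapsto U_g$ is measurable; in the literature this is done by a selection argument in the Polish group $G\times U(\mathcal H)$, after which measurable homomorphisms into the projective unitary group are automatically continuous. The word ``normalised'' conceals precisely this step, without which the appeal to semisimplicity and simple connectedness of $\tilde G$ has nothing to act on. Finally, a more minor point: your closing claim that homogeneity relative to $\Aut(\mathcal D)$ and to $G$ coincide is false when $\Aut(\mathcal D)$ is disconnected (for the polydisc, automorphisms may permute inequivalent coordinate operators); the paper's own statement is loose on this point, so it is not the heart of the matter, but it should not be asserted as a fact.
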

The proof (not entirely trivial) is the same as in \cite[Theorem 2.1]{KM}. 

For a bounded symmetric $\mathcal D,$ we call a $n$-tuple $\boldsymbol T$ in ${\mathrm B}^\prime_k(\mathcal D)$ and its corresponding bundle $E$ basic if $E$ is induced by an irreducible $\rho.$ From the results of Section 3, when $\mathcal D$ is the unit ball in $\mathbb C^n,$ $E$ is basic if and only if it is induced by some $\chi_\lambda\otimes \sigma$ with $\lambda < \sigma_\lambda.$

\begin{thm}
If $\mathcal D$ is the unit ball in $\mathbb C^2,$ all homogenous pairs in ${\mathrm B}^\prime_k(\mathcal D)$ are similar to direct sums of basic homogeneous pairs.
\end{thm}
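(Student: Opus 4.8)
The plan is to reduce to a single indecomposable bundle and there to extract the similarity from the fact, recorded in Remark~\ref{3.3}, that the relevant Hilbert spaces coincide \emph{as sets}.

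First I would invoke the correspondence of Section~4 in its reproducing kernel form: a homogeneous pair $\boldsymbol T\in{\mathrm B}^\prime_k(\mathcal D)$ is unitarily equivalent to the adjoint $(M_1^*,M_2^*)$ of the coordinate multiplication tuple on a Hilbert space $\mathcal H$ of sections, with reproducing kernel, of a $\tilde G$-homogeneous holomorphic Hermitian vector bundle $\mathbb E$. Such $\mathbb E$ is induced by a representation $\rho$ of $\mathfrak k^\mathbb C+\mathfrak p^-$, and $\rho$ is an orthogonal direct sum of indecomposable sub-representations. The induced orthogonal decomposition $\mathcal H=\bigoplus_s\mathcal H_s$ is invariant under each $M_j$ (multiplication by a scalar coordinate preserves the $V$-component structure), so $\boldsymbol T$ is the orthogonal direct sum of the pairs attached to the indecomposable summands $\mathbb E^{y_s}$. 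It therefore suffices to treat one indecomposable $\mathbb E^y$ and to show that its pair is similar to a direct sum of basic pairs.

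I would then fix such an $\mathbb E^y$; the regularity of $\lambda$ needed below is forced by the regularity of the invariant inner product that comes with membership in ${\mathrm B}^\prime_k$. By Remark~\ref{3.3}, for the ball in $\mathbb C^2$ the spaces $\mathcal H^0$ and $\mathcal H^y$ consist of exactly the same holomorphic $V$-valued functions with equivalent norms; hence the set-theoretic identity $J\colon\mathcal H^0\to\mathcal H^y$ is bounded with bounded inverse by the closed graph theorem. On the common function space, multiplication by the coordinate $z_j$ is literally the same operator, so $J M_j^0=M_j^y J$. Taking adjoints gives $(M_j^0)^* J^*=J^*(M_j^y)^*$ with $J^*$ bounded and invertible, which exhibits the pair of $\mathbb E^y$ as \emph{similar} to the pair of $\mathbb E^0$. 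Finally $\mathbb E^0$ is induced by $\rho$ with $\rho^-=0$, so it is an orthogonal direct sum of irreducible bundles, each of them basic and homogeneous (the regularity of the inner product yields the defining inequality $\lambda_j<\lambda_{\sigma_j}$); thus its pair is a direct sum of basic homogeneous pairs, and reassembling over $s$ finishes the proof.

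I expect the only conceptually delicate point to be that one obtains \emph{similarity} rather than unitary equivalence. The equivariant operator $\Gamma$ of Theorem~\ref{3.1} is a unitary of $\mathcal H^0$ onto $\mathcal H^y$, but, being a genuine constant coefficient differential operator, it does \emph{not} commute with multiplication by the coordinate functions; it therefore implements a unitary equivalence of the two $\tilde G$-representations, not of the operator tuples. What produces similarity of the tuples is instead the identity map $J$, and the whole force of the argument is that $J$ is bounded both ways. That boundedness is exactly Remark~\ref{3.3}, whose analytic heart is the boundedness of the building blocks $P\iota D$ from Theorem~\ref{3.2}. Granting that input, the reduction to indecomposables and the adjoint computation above are routine.
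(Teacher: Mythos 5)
Your proposal is correct and takes essentially the same route as the paper: the similarity is implemented by the identity map from $\mathcal H^0$ to $\mathcal H^y$, bounded both ways thanks to Remark \ref{3.3}, and this map visibly intertwines the multiplication tuples, while $\mathbb E^0$ decomposes into basic pieces. Your supplementary observations (reduction to indecomposable summands, passing to adjoints, and the point that $\Gamma$ is unitary but does not intertwine the $M_j$) are accurate elaborations of the argument the paper sketches.
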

The proof is based on Remark \ref{3.3}. The similarity arises as the identity map between $\mathcal H^0$ to $\mathcal H^y,$ which clearly intertwines the operators $M_j$ on the respective Hilbert spaces.  

\end{document}